\newcommand \trace{\mathrm{tr}}
\newcommand \ord{\mathrm{ord}}
\newcommand \weylD{D}
\theoremstyle{definition} \newtheorem{defi}{Definition}[section]
\theoremstyle{plain} \newtheorem{prop}{Proposition}[section]
\theoremstyle{plain} \newtheorem{thm}{Theorem}[section]
\theoremstyle{plain} \newtheorem{coro}{Corollary}[section]
\theoremstyle{definition} \newtheorem{example}{\emph{Example}}[section]
\def\mydate{\leavevmode\hbox{\the\month/\the\day/\the\year}}
\author{Robert Ream}
\title{The Adjunction Inequality for Weyl-Harmonic Maps}
\date{\mydate}
\begin{document}

\maketitle

\begin{abstract}
In this paper we study an analog of minimal surfaces called Weyl-minimal surfaces in conformal manifolds with a Weyl connection $(M^4,c,\weylD)$.
We show that there is an Eells-Salamon type correspondence between nonvertical $\mathcal{J}$-holomorphic curves in the weightless twistor space and branched Weyl-minimal surfaces. When $(M,c,J)$ is conformally almost-Hermitian, there is a canonical Weyl connection.  We show that for the canonical Weyl connection, branched Weyl-minimal surfaces satisfy the adjunction inequality
\begin{equation*} \chi(T_f\Sigma)+\chi(N_f\Sigma) \le \pm c_1(f^*T^{(1,0)}M). \end{equation*}
The $\pm J$-holomorphic curves are automatically Weyl-minimal and satisfy the corresponding equality.
These results generalize results of Eells-Salamon and Webster for minimal surfaces in K\"ahler 4-manifolds as well as their
extension to almost-K\"ahler 4-manifolds by Chen-Tian, Ville, and Ma.
\end{abstract}

\begin{section}{Introduction}
This paper describes an extension of the notion of minimal surfaces to the setting of conformal manifolds with a Weyl connection.
Particular attention is given to almost-Hermitian 4-manifolds endowed with their canonical Weyl connection. 
We first review the relevant standard theory.

According to Eells and Salamon \cite{eells1985}, branched minimal surfaces in an oriented Riemannian 4-manifold $M$ have a one-to-one correspondence with
nonvertical $\mathcal{J}$-holomorphic curves in the twistor space $\mathcal{Z}$, where $\mathcal{J}$ is the canonical non-integrable almost-complex structure on $\mathcal{Z}$. 
Applying twistor techniques, they further show that if $M$ is almost-K\"ahler with almost-complex structure $J$, the $\pm J$-holomorphic curves are minimal.  When $M$ is K\"ahler they prove the adjunction inequality,
\[ \chi(T_f\Sigma)+\chi(N_f\Sigma) \le \pm c_1(f^*T^{(1,0)}M), \]
where $T_f\Sigma$ is the tangent bundle to $\Sigma$ ramified at the branch points of $f$, and $N_f\Sigma$ is its normal bundle in $f^*TM$.
Concurrently, Webster \cite{webster1984} obtained his formulas (\ref{web1}), (\ref{web2})
 for a minimal surface in a K\"ahler 4-manifold, which imply the adjunction inequality.
The adjunction inequality was extended to minimal surfaces in almost-K\"ahler 4-manifolds by Chen-Tian \cite{chen1997}, Ville \cite{ville1997}, and Ma \cite{ma1998}.

This leads to the following picture for almost-K\"ahler manifolds:
The adjunction inequality holds for minimal surfaces;
every $\pm J$-holomorphic curve is minimal, and equality holds in (\ref{adj}) with the corresponding sign. 

For an almost-Hermitian manifold, in general, the $\pm J$-holomorphic curves are not minimal, and
in \cite{chen1997} they remark that the (\ref{adj}) will not hold for minimal surfaces.
In this paper we show that the above scenario for almost-K\"ahler manifolds can be extended to almost-Hermitian manifolds 
when considering a conformally invariant condition on surfaces related to the minimal condition. 
We now briefly describe this condition and list our main theorems.

Let $M$ be a manifold with conformal metric $c$ and Weyl connection $\nabla^D$, to be described in detail later.
For $i:\Sigma\to M$ an immersed submanifold, the Weyl second fundamental form $B$ is defined in \cite{Pedersen1995} as follows.
Taking $g\in c$, there is a one-form $\alpha_g$ such that $\nabla^D g=-2\alpha_g\otimes g$.
Let $A_g$ be the usual second fundamental form, then
\[ B = A_g - \left(\alpha_g^{\sharp_g}\right)^{\perp}\otimes g. \]
We say the submanifold is \emph{Weyl-minimal} if $\trace_{i^*g}B=0$. 
Branched Weyl-minimal then has the obvious meaning. 

We extend the Eells-Salamon twistor correspondence as follows.
\begin{thm}
Let $(M^4,c,\weylD)$ be a Weyl manifold, and $(\Sigma,[\eta])$ a Riemann surface.  There are complex structures $\mathcal J_\pm$ on the weightless twistor spaces $\mathcal{Z}_\pm$ which give a 1-to-1 correspondence between non-vertical $\mathcal J_\pm$-holomorphic curves $\tilde{f}_\pm:\Sigma\to\mathcal{Z}_\pm$ and non-constant weakly conformal branched Weyl-minimal immersions $f:\Sigma\to M$.
\end{thm}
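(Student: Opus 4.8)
\emph{Proof plan.} First I fix the geometric setup. The weightless twistor space $\mathcal{Z}_\pm\to M$ is the bundle whose fibre over $x$ consists of the $c$-orthogonal complex structures on $T_xM$ inducing the given, resp.\ the opposite, orientation; since orthogonality of a complex structure depends only on the conformal class, this bundle is canonically attached to $(M,c)$, with fibres diffeomorphic to $S^2$. The Weyl connection $\weylD$ is a torsion-free conformal connection, hence a principal connection on the $CO(4)$-frame bundle, and therefore induces a horizontal distribution $\mathcal H$ on $\mathcal{Z}_\pm$ complementary to the vertical distribution $\mathcal V=\ker d\pi$. At $J\in\mathcal{Z}_\pm$ over $x$ I let $\mathcal J_\pm$ act on $\mathcal H_J\cong T_xM$ as the tautological structure $J$ and on $\mathcal V_J=T_J(S^2_x)$ as $-J_{\mathrm{fib}}$, the conjugate of the fibre's natural complex structure (the Eells--Salamon choice, under which projections come out Weyl-minimal). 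As $\mathcal{Z}_\pm$ and $\weylD$ are conformally natural, so is $\mathcal J_\pm$, and the correspondence below will not depend on the auxiliary metric $g\in c$ used in computations.

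From curves to surfaces: let $\tilde f_\pm\colon\Sigma\to\mathcal{Z}_\pm$ be non-vertical and $\mathcal J_\pm$-holomorphic, and set $f=\pi\circ\tilde f_\pm$. Applying $d\pi$ to $d\tilde f_\pm\circ j=\mathcal J_\pm\circ d\tilde f_\pm$ gives $df\circ j=\tilde f_\pm(z)\circ df$, so $df_z$ intertwines $j$ with the $c$-orthogonal structure $\tilde f_\pm(z)$ and is therefore weakly conformal; by the local structure of pseudoholomorphic curves the set $\{df=0\}=\{d\tilde f_\pm\text{ vertical}\}$ is either all of $\Sigma$, which non-verticality excludes, or discrete, so $f$ is a weakly conformal branched immersion. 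For Weyl-minimality I differentiate $df\circ j=\tilde f_\pm\circ df$ covariantly with $\weylD$ along $\Sigma$: the vertical part of $d\tilde f_\pm$ records the $\weylD$-variation of the tangent complex structure, and the vertical block of the $\mathcal J_\pm$-holomorphicity identity becomes, once this derivative is expanded, an equation whose content is $\trace_{f^*g}B=0$. Concretely, choosing $g\in c$ and splitting $\weylD=\nabla^g+(\text{correction in }\alpha_g)$, the $\nabla^g$-part reproduces the classical Eells--Salamon computation, which says $\trace_{f^*g}A_g=0$, while the $\alpha_g$-terms collect exactly into $-\trace_{f^*g}\!\big((\alpha_g^{\sharp_g})^{\perp}\otimes g\big)$; their sum is $\trace_{f^*g}B$.

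From surfaces to curves: given a non-constant weakly conformal branched Weyl-minimal immersion $f$, away from its isolated branch points $df(T\Sigma)$ is a $2$-plane in $TM$ that, with $c$ and the orientation, carries a complex structure; in oriented $4$-space an oriented $2$-plane extends uniquely to a $c$-orthogonal complex structure agreeing with it on the plane and inducing the chosen, resp.\ opposite, orientation, which defines the Gauss lifts $\tilde f_\pm$ on $\Sigma$ minus the branch locus. Running the previous computation in reverse, the horizontal block of $\mathcal J_\pm$-holomorphicity holds tautologically and the vertical block holds precisely because $\trace_{f^*g}B=0$, so $\tilde f_\pm$ is $\mathcal J_\pm$-holomorphic off the branch locus; it extends continuously across the branch points by the local expansion of $f$, hence smoothly by removable singularities for $\mathcal J_\pm$-holomorphic maps. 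The two constructions are mutually inverse: projecting a lift returns $f$, and the Gauss lift of $\pi\circ\tilde f_\pm$ is $\tilde f_\pm$ since a $\mathcal J_\pm$-holomorphic curve's horizontal tangent line is the $2$-plane whose Gauss image is its own value.

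The substantive step is the vertical-block computation shared by both directions: organising the covariant derivative of the twistor lift so that $\alpha_g$ enters exactly in the combination $(\alpha_g^{\sharp_g})^{\perp}\otimes g$ occurring in the definition of $B$, which is what converts the classical minimality condition into Weyl-minimality. The conformal-naturality check and the removable-singularity analysis are routine, and one must track the fibrewise sign carefully so that $\mathcal{Z}_+$ and $\mathcal{Z}_-$ match the two ambient orientations.
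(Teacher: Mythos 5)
Your overall route is the same as the paper's: an Eells--Salamon-style correspondence via the twistor/Gauss lift, with the Weyl connection supplying the horizontal distribution and the orientation-reversing fibre structure, holomorphicity of the lift matched against the Weyl-minimal equation, and a separate treatment of branch points. However, two steps are not actually established. First, in the curves-to-surfaces direction you justify discreteness of $\{df=0\}$ ``by the local structure of pseudoholomorphic curves.'' That tool does not apply as cited: the points in question are vertical tangencies of $\tilde f_\pm$ (where $d\tilde f_\pm\neq 0$ but is tangent to the $\mathcal J_\pm$-complex fibre), not zeros of $d\tilde f_\pm$, and the projection $f=\pi\circ\tilde f_\pm$ is not itself a pseudoholomorphic curve, since the complex structure it intertwines varies with the lift. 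Moreover the theorem needs more than discreteness: to get a \emph{branched} immersion one needs the normal form $f_z=z^kZ$, $Z_p\neq 0$, at the singular points. The paper gets both by first deducing the Weyl-harmonic equation $\nabla^D_{\partial_{\bar z}}f_z=0$ from holomorphicity of the lift and then applying Aronszajn's unique continuation and the Hartman--Wintner theorem to that elliptic system (Proposition 2.1); some argument of this type (or a careful tangency-to-complex-fibres analysis for $\tilde f_\pm$) has to replace your citation.

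Second, the heart of the theorem --- the equivalence between the vertical block of $\mathcal J_\pm$-holomorphicity and $\trace_{f^*g}B=0$ --- is asserted rather than proved; your plan of splitting $\nabla^D=\nabla^{g}+(\alpha_g\text{-terms})$ is plausible but hides a genuine subtlety that the paper's computation exposes. Writing the lift as $\bigl((1\pm\star)f_z\wedge f_{\bar z}/(i\,c(f_z,f_{\bar z}))\bigr)^{\flat_c}$ and decomposing $\nabla^D_{\partial_z}\tilde f_\pm$ by type, holomorphicity only forces $(\nabla^D_{\partial_z}f_{\bar z})^{(1,0)}_\pm=k\,f_z$ for some function $k$; it is weak conformality, via $\nabla^D_{\partial_z}c(f_{\bar z},f_{\bar z})=2k\,c(f_z,f_{\bar z})$ and the reality of $\nabla^D_{\partial_z}f_{\bar z}$, that kills $k$ and upgrades this to the full Weyl-harmonic equation, which for weakly conformal maps is equivalent to Weyl-minimality (the tangential part of the tension vanishes automatically in the conformal case, as in the paper's Proposition 2.2). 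Your sketch never confronts this proportionality ambiguity, so as written the vertical-block step is a gap, albeit one that closes along exactly the lines the paper carries out.
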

If the Weyl derivative is exact, then the Weyl-minimal surfaces are minimal for a preferred metric in $c$, and this is just the usual correspondence for that metric.
\begin{thm}\label{JisHOL}
Let $(M^4,c,J,D)$ be a conformally almost-Hermitian manifold with its canonical Weyl connection. The almost-complex structure gives rise to a $\mathcal J_+$-holomorphic section of $\mathcal{Z}_+$.
\end{thm}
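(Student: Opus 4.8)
The plan is to realize $J$ as a section $\sigma$ of $\mathcal Z_+$, write out the first-order equation for $\mathcal J_+$-holomorphicity of $\sigma$, reduce it to a pointwise algebraic identity on $D$, and then obtain that identity from the defining property of the canonical Weyl connection.

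First I would set up the section. Since $J$ is an orthogonal complex structure for $c$, the assignment $\sigma(x):=J_x$ is a smooth section of the weightless twistor bundle, and with the orientation conventions fixing $\mathcal Z_+$ as the component containing the conformal almost-complex structure, $\sigma$ is a section of $\mathcal Z_+$. Holomorphicity means $d\sigma\circ J=\mathcal J_+\circ d\sigma$. Recall from the construction of $\mathcal J_\pm$ that $D$ induces a splitting $T_j\mathcal Z_\pm=\mathcal H_j\oplus\mathcal V_j$ of the tangent space at $j\in\mathcal Z_\pm$ over $x\in M$, with $\mathcal H_j\cong T_xM$ the $D$-horizontal space and $\mathcal V_j=\{S\in\mathrm{End}(T_xM):S\circ j+j\circ S=0,\ S\ \text{$c$-skew}\}$ the vertical space, and that $\mathcal J_+$ acts as $j$ on $\mathcal H_j$ and as $S\mapsto\epsilon\, j\circ S$ on $\mathcal V_j$ for the sign $\epsilon\in\{\pm1\}$ built into that construction. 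Using $\nabla^D g=-2\alpha_g\otimes g$ and $D(J^2)=0$ one checks, for any $g\in c$, that $D_X J$ is $c$-skew and anticommutes with $J$, so $D_X J\in\mathcal V_{\sigma(x)}$, and that $d\sigma_x(X)$ equals the $D$-horizontal lift of $X$ at $\sigma(x)$ plus the vertical vector $D_X J$. Substituting into $d\sigma\circ J=\mathcal J_+\circ d\sigma$, the horizontal parts agree automatically and the equation collapses to
\[
D_{JX}J=\epsilon\, J\circ D_X J\qquad\text{for all vector fields }X,
\]
which, for the relevant sign, is the quasi-K\"ahler (i.e.\ $(1,2)$-symplectic) relation $(D_X J)Y+(D_{JX}J)(JY)=0$ for $D$.

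It then remains to verify this identity for the canonical Weyl connection. Fix $g\in c$ and write $D=\nabla^g+\gamma_g$, with $\nabla^g$ the Levi-Civita connection of $g$ and $\gamma_{g,X}Y=\alpha_g(X)Y+\alpha_g(Y)X-g(X,Y)\alpha_g^{\sharp_g}$; then $D_X J=\nabla^g_X J+T_X$ where $T_X:=[\gamma_{g,X},J]$ depends only on the skew part $\alpha_g^{\sharp_g}\wedge(\cdot)$ of $\gamma_{g,X}$, the conformal part $\alpha_g(X)\,\mathrm{id}$ commuting with $J$. A short computation from the explicit form of $\gamma_g$ yields the symmetry $T_X Y=T_{JX}(JY)$, so $T$, as a tensor in $X$, lies in the $(+1)$-eigenspace of the involution $\tau_X Y\mapsto\tau_{JX}(JY)$ on the intrinsic-torsion space $T^*M\otimes\mathcal V$. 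In complex dimension two that eigenspace is exactly the Lee ($W_4$) summand of the Gray--Hervella decomposition, the $(-1)$-eigenspace being the quasi-K\"ahler ($W_2$) summand and the summands $W_1,W_3$ vanishing identically; moreover $\alpha_g\mapsto T$ is a linear isomorphism onto $W_4$. Since $\nabla^g J$ has only $W_2$ and $W_4$ parts here, the canonical normalization of $\alpha_g$ — the unique one for which $T$ cancels the $W_4$-part of $\nabla^g J$, equivalently $\alpha_g$ a fixed multiple of the Lee form of $(g,J)$ — makes $D_X J$ purely of $W_2$-type, i.e.\ $D_{JX}J=-J\circ D_X J$ for all $X$. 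With the sign matching the model for $\mathcal J_+$, this is the identity above, so $\sigma$ is $\mathcal J_+$-holomorphic. (If the defining property of the canonical Weyl connection recorded earlier is stated directly as ``$(M,c,J)$ is conformally almost-K\"ahler for $D$'' or as vanishing of the $W_4$-torsion, the Gray--Hervella bookkeeping is unnecessary and the identity is read off at once.)

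The main obstacle is this last step, and specifically the normalization claim: that the canonical Weyl $1$-form is exactly the one cancelling the $W_4$-part of $\nabla^g J$ — equivalently, that the Lee form of $(g,J)$ agrees, up to a universal constant, with the $W_4$-component of $\nabla^g J$. Establishing this is a standard but slightly fussy tensor computation which, once the definition of the canonical Weyl connection from the earlier section is unwound, reduces to matching two formulas for the Lee form; it also uses the dimension-four facts $W_1=W_3=0$. A secondary nuisance will be tracking signs and orientations carefully, so that ``quasi-K\"ahler'' emerges with the sign forced by the chosen model for $\mathcal J_+$ and so that $\sigma$ genuinely lands in $\mathcal Z_+$ rather than $\mathcal Z_-$.
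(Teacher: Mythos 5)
Your proposal is correct, but it reaches the key identity by a different route than the paper. You reduce holomorphicity of the section to the pointwise quasi-K\"ahler relation $D_{JX}J=-J\,D_XJ$ and then argue representation-theoretically: in real dimension four the intrinsic torsion space is $W_2\oplus W_4$ (since $W_1=W_3=0$), the Weyl modification $[\gamma_{g,X},J]$ lies entirely in $W_4$ (your symmetry $T_XY=T_{JX}(JY)$ is correct), and the canonical normalization of $\alpha_g$ kills the $W_4$-part of $\nabla^gJ$, leaving $DJ$ of pure $W_2$-type. The paper instead proves a Weyl version of the Kobayashi--Nomizu formula, $c(N(X,Y),JZ)=-2c((\nabla^D_ZJ)X,Y)$ (its Proposition on the canonical Weyl derivative, obtained from $d^D\omega_c=0$), and then uses the $J$-antilinearity of the Nijenhuis tensor to get $\nabla^D_{JZ}\omega_c=\mathcal J_+\nabla^D_Z\omega_c$ --- which is exactly your quasi-K\"ahler identity in disguise; the payoff of that route is that the same Nijenhuis formula is reused later in the proof of Webster's formulas. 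Concerning the step you flag as the main obstacle (that the canonical $\alpha_g$ is precisely the one cancelling the $W_4$-part): this can be closed more directly than by ``matching two formulas for the Lee form.'' Since $\nabla^D$ is torsion-free, $d^D\omega_c$ is the alternation of $\nabla^D\omega_c$; alternation annihilates $W_2$ and, because $W_1=W_3=0$ in dimension four, is injective on $W_4$, so the defining condition $d^D\omega_c=0$ of the canonical Weyl connection forces $\nabla^D\omega_c$ to be pure $W_2$ at once --- essentially the shortcut you mention in your final parenthesis. The only remaining care needed is the sign you call $\epsilon$: with the paper's vertical complex structure $(J_+\beta)(X,Y)=\beta(J_+X,Y)$, the endomorphism form of $\mathcal J_+$ on vertical vectors is $S\mapsto S\circ J=-J\circ S$, so $\epsilon=-1$ and the condition is indeed the quasi-K\"ahler one (with the other sign you would need $DJ\in W_4$, which is false in general), so that bookkeeping is not optional.
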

This and the previous theorem imply the following corollary.
\begin{coro}
Under the assumptions of Theorem $\ref{JisHOL}$, a $\pm J$-holomorphic curve $f:\Sigma\to M$ is a weakly conformal branched Weyl-minimal immersion.
\end{coro}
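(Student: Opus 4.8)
The plan is to obtain the corollary by the standard twistor-lift mechanism: a holomorphic curve composed with a holomorphic section is again a holomorphic curve. Theorem~\ref{JisHOL} supplies the section, and Theorem~1.1 supplies the dictionary between $\mathcal J_\pm$-holomorphic curves in $\mathcal Z_\pm$ and weakly conformal branched Weyl-minimal immersions.

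I would first treat the $+J$ case. Suppose $f:\Sigma\to M$ is a non-constant $+J$-holomorphic curve, so $df\circ j=J\circ df$, where $j$ denotes the complex structure determined by $[\eta]$. Regard $J$ as the section $s_J:M\to\mathcal Z_+$, $x\mapsto J_x$. By Theorem~\ref{JisHOL}, $s_J$ is $\mathcal J_+$-holomorphic, that is, $ds_J\circ J=\mathcal J_+\circ ds_J$, where $M$ carries the tautological almost-complex structure of the section $s_J$, which here is $J$ itself. Setting $\tilde f:=s_J\circ f$ and applying the chain rule,
\[ d\tilde f\circ j = ds_J\circ df\circ j = ds_J\circ J\circ df = \mathcal J_+\circ ds_J\circ df = \mathcal J_+\circ d\tilde f, \]
so $\tilde f:\Sigma\to\mathcal Z_+$ is $\mathcal J_+$-holomorphic, and it is non-vertical since $\pi\circ\tilde f=f$ is non-constant. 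By Theorem~1.1 the weakly conformal branched Weyl-minimal immersion associated to $\tilde f$ is $\pi\circ\tilde f=f$, whence $f$ itself is such an immersion. (It is worth recording that a non-constant $J$-holomorphic map into an almost-Hermitian target is automatically weakly conformal and a branched immersion, so the genuinely new information extracted from the twistor correspondence is the Weyl-minimality.)

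For the $-J$ case I would reduce to the case just handled. If $df\circ j=-J\circ df$, then $f$ is $+J$-holomorphic as a map from the conjugate Riemann surface $(\Sigma,[-\eta])$, and both the weak conformality of $f$ and the condition $\trace_{f^{*}g}B=0$ refer only to the immersion $f:\Sigma\to M$ and not to the orientation of $\Sigma$; hence the conclusion for $(\Sigma,[-\eta])$ is the conclusion for $(\Sigma,[\eta])$. Alternatively, one can observe that $(M,c,-J,D)$ is again conformally almost-Hermitian with the \emph{same} canonical Weyl connection, the Lee form being unchanged under $J\mapsto-J$; then Theorem~\ref{JisHOL} applies directly to $-J$, producing a $\mathcal J_+$-holomorphic section $s_{-J}$, and the displayed computation runs with $s_{-J}$ in place of $s_J$. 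If one prefers to keep the $-$ subscript, the companion statement in $(\mathcal Z_-,\mathcal J_-)$ gives the lift there.

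Since every step is either a one-line chain-rule identity or a direct citation of a result already in hand, I do not anticipate a real obstacle. The only points that call for care are bookkeeping: verifying that none of the three attributes packaged in ``weakly conformal branched Weyl-minimal immersion'' is affected by reversing the orientation of $\Sigma$ or by the sign of $J$, and recalling that the correspondence of Theorem~1.1 is realised concretely by $\tilde f\mapsto\pi\circ\tilde f$, so that identifying $f$ with the associated Weyl-minimal surface needs no further argument.
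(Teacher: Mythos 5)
Your argument is correct and is essentially the paper's proof: compose $f$ with the holomorphic section of $\mathcal{Z}_+$ furnished by Theorem~\ref{JisHOL}, note the composite is a non-vertical $\mathcal{J}_+$-holomorphic curve, and invoke the twistor correspondence to conclude $f$ is a weakly conformal branched Weyl-minimal immersion. Your explicit treatment of the $-J$ case (via the conjugate complex structure on $\Sigma$, or via $(M,c,-J,D)$ having the same canonical Weyl connection) is sound detail that the paper leaves implicit.
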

Finally we prove that Webster's formulas hold for branched Weyl-minimal immersions.
\begin{thm}\label{webforthm}
For a Riemann surface $(\Sigma,[\eta])$ and a conformally almost-Hermitian manifold with its canonical Weyl-connection $(M^4,c,J,D)$, if $f:\Sigma \to M$ is a weakly conformal branched Weyl-minimal immersion with $P$ complex points and $Q$ anti-complex points then
\begin{align}
\label{web1} \chi(T_f\Sigma)+\chi(N_f\Sigma) &= -P-Q \\
\label{web2} c_1(f^*T^{(1,0)}M) &= P-Q.
\end{align}
\end{thm}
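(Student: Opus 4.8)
The plan is to read off both identities from the twistor correspondence of Theorem~1.1 together with the $\mathcal{J}_+$‑holomorphic section furnished by Theorem~\ref{JisHOL}: I would translate ``complex point'' and ``anti‑complex point'' of $f$ into intersection numbers with the $\pm J$‑sections, and then compute those numbers by elementary intersection theory on the $\mathbb{CP}^1$‑bundle obtained by pulling the twistor fibration back along $f$.

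First I would assemble three distinguished sections of $f^*\mathcal{Z}_+\to\Sigma$. By Theorem~1.1 the weakly conformal branched Weyl‑minimal immersion $f$ corresponds to a non‑vertical $\mathcal{J}_+$‑holomorphic curve $\tilde f_+\colon\Sigma\to\mathcal{Z}_+$, which becomes a section $s$ of $f^*\mathcal{Z}_+$. Since the canonical Weyl connection depends on $(c,J)$ only through the Lee form, and the Lee form is unchanged under $J\mapsto-J$, Theorem~\ref{JisHOL} applies verbatim to $-J$ as well; combining it with its application to $J$ produces two further sections, the pullbacks $s_{+J}$ and $s_{-J}$ of the $\mathcal{J}_+$‑holomorphic sections $x\mapsto J_x$ and $x\mapsto -J_x$ of $\mathcal{Z}_+$, which are disjoint because no complex structure equals its negative. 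A point of $\Sigma$ is a complex point of $f$ exactly when the twistorial almost‑complex structure carried by $\tilde f_+$ there agrees with $J$, i.e.\ when $s$ meets one of the sections $s_{\pm J}$, and an anti‑complex point exactly when it meets the other; and because distinct $\mathcal{J}_+$‑holomorphic curves in $\mathcal{Z}_+$ meet in isolated points with well‑defined positive local multiplicities (positivity of intersections for pseudo‑holomorphic curves), $P$ and $Q$ are finite and equal the intersection numbers $s\cdot s_{+J}$ and $s\cdot s_{-J}$ computed in the ruled surface $f^*\mathcal{Z}_+$.

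Next I would do the intersection theory. On a $\mathbb{CP}^1$‑bundle over $\Sigma$ every section $C$ satisfies $C^2=2\,(C\cdot C_0)-C_0^2$ for a fixed reference section $C_0$; applying this with $C_0=s_{+J}$ and using $s_{+J}\cdot s_{-J}=0$ gives $s^2=P+Q$ and $s_{+J}^2=\pm(P-Q)$, which are the right‑hand sides of (\ref{web1}) and (\ref{web2}) once the two self‑intersections are matched with curvature data. The self‑intersection of a section is the degree of its normal bundle, and the normal bundle of a section of a fiber bundle is the restriction of the vertical tangent bundle; so $s_{+J}^2$ is the degree of the $f$‑pullback of the normal bundle of the $J$‑section, which fiberwise is the tangent space to the $\mathbb{CP}^1$ of conformal complex structures at $J$ and is identified by a pointwise computation with a line bundle of first Chern class $\pm c_1(M)$ built from $\Lambda^{2,0}_J$; this yields $s_{+J}^2=\pm c_1(f^*T^{(1,0)}M)$ and hence (\ref{web2}). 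For $s^2$ I would use that weak conformality makes the tangent and normal planes of $f$ invariant under the twistorial almost‑complex structure $J_{\tilde f_+}$, so that $f^*TM$ with that structure splits as a sum of two complex line bundles which, up to the ramification built into the notation, are $T_f\Sigma$ and $N_f\Sigma$; feeding this splitting into the same vertical‑bundle identification along $s$ expresses $s^2$ through $\chi(T_f\Sigma)$ and $\chi(N_f\Sigma)$ and gives (\ref{web1}).

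The main obstacle is the orientation and complex‑structure bookkeeping. One must fix consistently the orientation of $\mathcal{Z}_+$, the sign of the fiberwise complex structure entering $\mathcal{J}_+$, the complex structures placed on $T_f\Sigma$ and $N_f\Sigma$ relative to that of $\Sigma$ (these are the twistorial ones and may be conjugate to the naive ones, which is exactly what forces the minus signs in (\ref{web1}) and is consistent with the $\pm J$‑holomorphic case), and which of $s_{\pm J}$ is met at complex rather than anti‑complex points — and only then do the signs match (\ref{web1}) and (\ref{web2}), with (\ref{web1}) also absorbing the contribution of the branch divisor. A point particular to the present setting is that these are computations on the \emph{weightless} twistor space attached to $(c,D)$ rather than to a metric, so I would check that the vertical bundle of $\mathcal{Z}_\pm$ and the normal bundles of the $\pm J$‑sections have the stated first Chern classes there; the canonical Weyl connection enters only through Theorem~\ref{JisHOL}, ensuring the $\pm J$‑sections are $\mathcal{J}_+$‑holomorphic, so nothing further about $D$ is needed. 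Alternatively one could argue directly in the style of Webster: choose an adapted $g$‑orthonormal frame along $f$ for some $g\in c$, express $D$ through the Levi‑Civita connection of $g$ and the Lee form, impose $\trace_{f^*g}B=0$, and count the zeros of the $(1,0)$‑ and $(0,1)$‑parts of $df$ by a Gauss–Bonnet argument; this avoids the twistor conventions at the cost of a longer local computation.
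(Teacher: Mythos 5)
Your route is genuinely different from the paper's: the paper never leaves $f^*TM$ — it splits $f_z=\alpha+\bar\beta$ into $(1,0)$ and $(0,1)$ parts, uses Proposition \ref{weylJN} (the identity $c(N(X,Y),JZ)=-2c((\nabla^D_ZJ)X,Y)$, which is where the canonical Weyl connection enters) to show that, for the Koszul--Malgrange holomorphic structures, $\bar\partial\alpha$ and $\bar\partial\bar\beta$ are pointwise multiples of $\alpha$ and $\bar\beta$, then applies the Bers--Vekua similarity principle to obtain local holomorphic factorizations; this is what defines $P$, $Q$, $R$ as non-negative vanishing orders, and (\ref{web1})--(\ref{web2}) follow from degree bookkeeping for the line bundles $[f_z]$, $[\alpha]$, $[\beta]$. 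Your intersection-theoretic skeleton is coherent as homological bookkeeping: granting $s\cdot s_{+J}=P$, $s\cdot s_{-J}=Q$, $s_{+J}\cdot s_{-J}=0$, the ruled-surface relations do give $s^2=P+Q$ and $s_{+J}^2=P-Q$, and the identification of the normal bundles of the sections with the vertical bundle (hence with $\pm c_1(f^*T^{(1,0)}M)$ along $s_{\pm J}$, and with the tangent-plus-normal splitting along $s$) is the right mechanism, signs aside.

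The genuine gap is at the crux: the claim that complex and anti-complex points are isolated, carry \emph{positive} local multiplicities equal to the theorem's counts (with the ramification correction at branch points), and sum to the homological intersection numbers. You dispose of all of this with one appeal to ``positivity of intersections for pseudo-holomorphic curves,'' and that theorem does not apply as you use it. Positivity of intersections concerns two pseudo-holomorphic curves in an almost complex $4$-manifold; in your $4$-manifold $f^*\mathcal{Z}_+$ the comparison sections $x\mapsto\pm J_{f(x)}$ are composites of the holomorphic sections of Theorem \ref{JisHOL} with $f$, and $f$ is \emph{not} $\pm J$-holomorphic, so these sections are not pseudo-holomorphic for any almost complex structure you exhibit (you exhibit none on $f^*\mathcal{Z}_+$). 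Working instead in $\mathcal{Z}_+$ itself, one would need positivity for the curve $\tilde f_+$ against the almost complex codimension-two submanifolds $s_{\pm J}(M)$, plus the dichotomy that either $\tilde f_+(\Sigma)$ lies inside $s_{\pm J}(M)$ (the $\pm J$-holomorphic case, where $P$ or $Q$ is not a finite count and the statement needs separate treatment) or the coincidence points are isolated with positive index; proving that is essentially the same local analysis (unique continuation and the similarity principle applied to the components of $f_z$, via Proposition \ref{weylJN}) that the paper performs directly, so it cannot simply be cited away. Finally, the matching of your local intersection indices with the theorem's multiplicities at branch points — the subtraction of $\mathrm{ord}_p(f_z)$ in the definitions of $P$ and $Q$ — is acknowledged (``up to the ramification built into the notation'') but never carried out, and it is needed for the equalities (\ref{web1})--(\ref{web2}), not merely for the inequality. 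In short: a viable Eells--Salamon-style alternative in outline, but the analytic heart of the theorem is missing.
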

The adjunction inequality follows from $P$ and $Q$ being positive.
\begin{coro}\label{adjcor}
For a Riemann surface $(\Sigma,[\eta])$ and a conformally almost-Hermitian manifold with its canonical Weyl-connection $(M^4,c,J,D)$, if $f:\Sigma \to M$ is a weakly conformal branched Weyl-minimal immersion then
\begin{equation}\label{adj} \chi(T_f\Sigma)+\chi(N_f\Sigma) \le \pm c_1(f^*T^{(1,0)}M). \end{equation}
\end{coro}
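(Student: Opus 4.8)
The plan is to read off (\ref{adj}) directly from Webster's formulas (\ref{web1})--(\ref{web2}) of Theorem \ref{webforthm}, the only extra ingredient being that the point counts $P$ and $Q$ are nonnegative. Subtracting (\ref{web2}) from (\ref{web1}), and then adding it to (\ref{web1}), gives
\[
\chi(T_f\Sigma)+\chi(N_f\Sigma) = c_1(f^*T^{(1,0)}M) - 2P
\qquad\text{and}\qquad
\chi(T_f\Sigma)+\chi(N_f\Sigma) = -c_1(f^*T^{(1,0)}M) - 2Q .
\]
If $P\ge 0$ and $Q\ge 0$, the two right-hand sides are bounded above by $+c_1(f^*T^{(1,0)}M)$ and $-c_1(f^*T^{(1,0)}M)$ respectively, which together is exactly the two-signed inequality (\ref{adj}). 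Thus everything reduces to proving $P\ge 0$ and $Q\ge 0$.

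For this, recall that a point $p\in\Sigma$ is a complex point of $f$ when $df_p$ intertwines the conformal complex structure of $(\Sigma,[\eta])$ with $J$, and an anti-complex point when it anti-intertwines them; equivalently, complex points are the zeros of $\bar\partial f\in\Gamma(\overline{K_\Sigma}\otimes f^*T^{(1,0)}M)$ and anti-complex points are the zeros of $\partial f\in\Gamma(K_\Sigma\otimes f^*T^{(1,0)}M)$, and $P,Q$ are the corresponding sums of local vanishing orders, as they arise in the proof of Theorem \ref{webforthm}. The key analytic input is that, because $f$ is weakly conformal and branched Weyl-minimal --- equivalently, by the twistor correspondence of the first theorem together with Theorem \ref{JisHOL}, because $f$ is covered by a $\mathcal J_\pm$-holomorphic curve in $\mathcal{Z}_\pm$ --- both $\partial f$ and $\bar\partial f$ satisfy, in a local holomorphic frame, a first-order elliptic system of the schematic form $\bar\partial s = a\,s + b\,\bar s$ coming from the Weyl-harmonic map equation. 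By the Carleman--Aronszajn similarity principle, a solution of such a system is either identically zero or has only isolated zeros, each of strictly positive order; hence $P$ and $Q$ are well-defined nonnegative integers. In the degenerate cases one of $\partial f,\bar\partial f$ vanishes identically: if $\bar\partial f\equiv 0$ then $f$ is $J$-holomorphic, its anti-complex points are precisely its branch points, $Q$ equals the total branching order $\ge 0$, and (\ref{web1})--(\ref{web2}) force $P=0$; symmetrically if $\partial f\equiv 0$. Both cannot vanish at once, since $f$ is a branched immersion and hence non-constant. This bookkeeping also yields the equality statement promised in the abstract.

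I expect the main obstacle to be exactly this positivity, that is, checking that the sections cutting out the complex and anti-complex points genuinely satisfy a Cauchy--Riemann-type equation (or differential inequality), so that the similarity principle applies and the local contributions are positive --- rather than merely that the cohomological identities (\ref{web1})--(\ref{web2}) hold. In practice this positivity is already built into the derivation of Theorem \ref{webforthm}, where $P$ and $Q$ are obtained as numbers of zeros of pseudoholomorphic sections; granting that, the corollary is immediate, and the only remaining care is the treatment of the two $\pm J$-holomorphic cases recorded above.
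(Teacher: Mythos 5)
Your proposal is correct and follows essentially the same route as the paper: the inequality is read off immediately from Webster's formulas (\ref{web1})--(\ref{web2}) together with $P,Q\ge 0$, which is exactly how the paper concludes, writing $\chi(T_f\Sigma)+\chi(N_f\Sigma)\pm c_1(f^*T^{(1,0)}M)=-2P$ or $-2Q$. Your extra discussion of the similarity principle guaranteeing isolated zeros of positive order is precisely the content already built into the paper's proof of Theorem \ref{webforthm}, where $P$ and $Q$ are defined as nonnegative counts via the Bers--Vekua principle, so nothing further is needed for the corollary.
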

The corresponding equality holds for $\pm J$-holomorphic curves.
\end{section}

\begin{section}{Preliminaries}
\begin{subsection}{Weyl Geometry}
By definition, the \emph{density bundle} on an $n$-dimensional manifold $M$ is $L:=\left|\Lambda^nTM\right|^{\frac{1}{n}}$.
The tensor bundles $L^w\otimes TM^j \otimes T^*M^k$ are said to have \emph{weight} $w+j-k$.
A \emph{Weyl derivative} $D$ is a connection on the density bundle.
A \emph{conformal metric} $c$ is a metric on the weightless tangent bundle $L^{-1}TM$
satisfying the normalizing condition $|\!\det c|=1$.
This can also be considered as a metric on $TM$ with values in $L^2$.
\begin{defi}
The triple $(M,c,D)$ is called a \emph{Weyl Manifold}.
\end{defi}
The bundle $L$ is trivial, and a nowhere zero section of $L$, $\mu$, is called a \emph{length scale}.
This defines a metric in the conformal class $c$ by $g_\mu = \mu^{-2} c$.  
The section $\mu$ gives a trivialization of $L$ which has a corresponding trivializing connection $D^\mu$.
This defines a one form $\alpha_\mu = D - D^\mu$, so that 
\[ D(h\mu)=(dh + h D) \mu=(dh+h(\alpha_\mu+D^\mu))\mu=(dh+h\alpha_\mu)\mu.\]
There is a unique torsion free connection $\nabla^D$ on $TM$ making $c$ parallel,
\[ \nabla^D_X Y = \nabla^{g_\mu}_X Y + \alpha_\mu(X)Y+\alpha_\mu(Y)X-g_\mu(X,Y)\alpha_\mu^{\sharp_{g_\mu}}, \]
where $\nabla^{g_\mu}$ is the Levi-Civita connection for the metric $g_\mu$.
\begin{subsubsection}{Weightless Twistor Space}
When $M$ is oriented, $c$ defines a section $\nu_c$ of the orientation bundle $L^n\Lambda^n T^*M$.
This can be used to define the conformal Hodge star
\[\star:L^m \Lambda^k T^*M \to L^{m+n-2k} \Lambda^{n-k} T^*M, \]
where for $\beta,\gamma\in L^k \Lambda^k T^*M$
\[ \beta\wedge\star\gamma = c(\beta,\gamma)\nu_c \]
For $n=4$ and $m=0$, $\star:\Lambda^2 T^*M\to\Lambda^2 T^*M$ is an involution with $\pm 1$ eigenspaces $\Lambda^2_\pm T^*M$.
The weightless twistor spaces \cite{calderbank1999} can be constructed as the sphere bundles 
\[ \mathcal{Z}_\pm = S(L^{2}\Lambda^2_\pm T^*M). \]

We now review the construction of an almost-complex structure $\mathcal{J}_\pm$ on $\mathcal{Z}_\pm$.
This can be seen by working at a point $q_\pm\in\mathcal{Z}_\pm$ which projects to $p\in M$.
For $U$ a neighborhood of $p$, and a local section $s_\pm:U\to\mathcal{Z}_{\pm|U}$ satisfying $s_\pm(p)=q_\pm$, there is a weightless K\"ahler form $\sigma_\pm$ given by this section and a corresponding almost-complex structure $J_\pm$ on $T_pM$ given by
\[ \sigma_\pm(X,Y) = c(J_\pm X,Y). \]
As the fiber of $\mathcal{Z}_\pm$ at $p$ is a sphere in $L^2\Lambda^2_\pm T^*_p M$, the vertical tangent space at $q_\pm$ is the space perpendicular to $\sigma_\pm$ in $L^2\Lambda^2_\pm T^*_p M$.  This is the space of weightless $J_\pm$-anti-invariant 2-forms \cite{Draghici2013}.

\begin{defi}
The space of weightless $J_\pm$-anti-invariant 2-forms $L^2\Lambda^{2,J_\pm}_-T^*_pM$ is the $(-1)$-eigenspace for the involution
$I_\pm:L^2\Lambda^2 T^*_pM\to L^2\Lambda^2 T^*_pM$ given by $(I_\pm\beta)(X,Y)=\beta(J_\pm X,J_\pm Y)$.
\end{defi}
There is an induced almost-complex structure acting on $\beta\in L^2\Lambda^{2,J_\pm}_-T^*_pM$ by 
\[ (J_\pm \beta)(X,Y) = \beta(J_\pm X,Y). \]
To see that this is an almost-complex strucure, first note that $J_\pm\beta$ is a two form as
\[\beta(J_\pm X,Y)= -\beta(Y,J_\pm X) = \beta(J_\pm Y, J^2_\pm X) = -\beta(J_\pm Y, X).\]
Second, 
\[I_\pm J_\pm\beta=J_\pm I_\pm\beta=-J_\pm\beta,\]
so $J_\pm\beta\in L^2\Lambda^{2,J_\pm}_-T^*_pM$.
Finally, it is easily seen that $J_\pm^2\beta=-\beta$.

Extending $\beta$ to be complex bilinear gives
\begin{align*}
\frac{1}{4}\beta(X+iJ_\pm X,Y+iJ_\pm Y) &=\frac{1}{2}(\beta+i J_\pm\beta)(X,Y), \\
\frac{1}{4}\beta(X-iJ_\pm X,Y+iJ_\pm Y) &= 0 \\
\frac{1}{4}\beta(X-iJ_\pm X,Y-iJ_\pm Y) &= \frac{1}{2}(\beta-i J_\pm\beta)(X,Y).
\end{align*}
Therefore $\beta\in L^2(\Lambda^{2,0}_\pm T^*_pM\oplus\Lambda^{0,2}_\pm T^*_pM)$. This shows that $L^2\Lambda^{2,J_\pm}_-T^*_pM\perp \sigma_\pm$ as 
there is an orthogonal splitting
\[ \Lambda^2_\pm T^*_pM\otimes\mathbb{C}=\Lambda^{2,0}_\pm T^*_pM\oplus\Lambda^{0,2}_\pm T^*_pM\oplus\mathbb{C}\sigma_\pm. \]
Furthermore $\frac{1}{2}(\beta-i J_\pm\beta)\in L^2\Lambda^{2,0}_\pm T^*_pM$ and $\frac{1}{2}(\beta+i J_\pm\beta)\in L^2\Lambda^{0,2}_\pm T^*_pM$.

There is an isomorphism, $\beta\mapsto\beta^v$, from $L^2\Lambda^{2,J_\pm}_-T^*_pM$ to the vertical tangent space $V(T_{q_\pm}\mathcal{Z}_\pm)$, so that for $\beta\in L^2\Lambda^{2,J_\pm}_-T^*_pM$ we have $\beta^v\in V(T_{q_\pm}\mathcal{Z}_\pm)$.
This isomorphism and the connection induce an isomorphism, $X\mapsto X^h$, from $T_pM$ to the horizontal tangent space $H(T_{q_\pm}\mathcal{Z}_\pm)$ by
\begin{equation}\label{vhConn} ds_\pm(X) = X^h + (\nabla^D_X \sigma_\pm)^v. \end{equation}
The almost-complex structure on $\mathcal{Z}_\pm$ is now given by linearly extending
\begin{align}\label{calJh} \mathcal{J}_\pm(X^h) &:= (J_\pm X)^h, \\
\label{calJv}\mathcal{J}_\pm(\beta^v) &:= (J_\pm \beta)^v. 
\end{align}
In \cite{eells1985} Eells and Salamon used a similar complex structure to study weakly conformal harmonic maps.
Their complex structure is the same as the complex structure of Penrose, studied by Atiyah, Hitchen and Singer in \cite{AtiyahHitchenSinger}, except that it reverses the orientation of the fibers.
The complex structure defined in (\ref{calJh}) and (\ref{calJv}) differs from that of Eells and Salamon only in the use of a Weyl connection to define the horizontal space rather than the Levi-Civita connection.
\end{subsubsection}
\begin{subsubsection}{Submanifold Geometry}

Let $(M,c,D)$ be a Weyl manifold, and $i:\Sigma\to M$ an immersed submanifold. 
Then $\Sigma$ inherits a conformal structure $\bar{c}$ and Weyl derivative $\bar{D}$.
One way to see this is to choose a length scale, $\mu\in\Gamma(L)$. 
Then the metric $g_\mu$ and the one form $\alpha_\mu$ can be pulled back to $\Sigma$ as $\bar{g}_\mu=i^*g_\mu$ and $\bar{\alpha}_\mu=i^*\alpha_\mu$.
Hence $\bar{\mu}=|\!\det \bar{g}_\mu|^{-1/(2\dim\Sigma)}$ is a section of the density bundle of $\Sigma$.
The inherited conformal metric is $\bar{c}=\bar{\mu}^2\bar{g}_\mu$ and the inherited Weyl derivative is 
\[ \bar{D}(h\bar{\mu}) = (dh + h\bar{\alpha}_\mu) \bar{\mu}. \]
The connection $\bar{\nabla}^{\bar{D}}$ on $\Sigma$ is defined so that $\bar{c}$ is parallel,
\[ \bar{\nabla}^{\bar{D}}_X Y = \bar{\nabla}^{\bar{g}_\mu}_X Y + \bar{\alpha}_\mu(X)Y+\bar{\alpha}_\mu(Y)X-\langle X,Y\rangle_{\bar{g}_\mu}\bar{\alpha}_\mu^{\sharp_{\bar{g}_\mu}}. \]
The Weyl second fundamental form \cite{Pedersen1995} is given by
\[B^D(X,Y)=\nabla^D_X Y - \bar{\nabla}^{\bar{D}}_X Y.\]
Equivalently, for $A_{g_\mu} = \nabla^{g_\mu}-\bar{\nabla}^{\bar{g}_\mu}$
\[B^D(X,Y)=A_{g_\mu}(X,Y) - \langle X,Y\rangle_{g_\mu}\left(\alpha_\mu^{\sharp_{g_\mu}}\right)^\perp. \]
The Weyl mean curvature is 
\begin{equation}\label{weylH}\mathbf{H}^D = \frac{1}{\dim\Sigma}\trace_{\bar{g}_\mu}B^D = \mathbf{H}_{g_\mu}-\left(\alpha_\mu^{\sharp_{g_\mu}}\right)^\perp,
\end{equation}
where $\mathbf{H}_{g_\mu}$ is the usual mean curvature of $\Sigma$ with respect to the metric $g_\mu$.
\begin{defi}
The immersion $i:\Sigma\to M$ is \emph{Weyl-minimal} if $\mathbf{H}^D=0$.
\end{defi}
\begin{example}{Exact and Closed Weyl Derivatives}\\
If $(M,c,D)$ is a Weyl manifold and there is a length scale $\mu$ so that $\alpha_\mu$ is exact, then $D$ is called exact.
If $\alpha_\mu=du$, then $D=D^{e^{-u}\mu}$ and the Weyl-minimal surfaces are just the minimal surfaces for the metric $g^{e^{-u}\mu}=e^{2u}g^\mu$.
Similarly, if $\alpha_\mu$ is closed then $D$ is called closed.
In this case, if $f:\Sigma\to M$ is a Weyl-minimal branched immersion then there is a lift to the universal cover $\tilde f:\tilde\Sigma\to\tilde M$.
The conformal metric and Weyl Derivative can be lifted to $\tilde M$ and the closed Weyl derivative becomes exact.
Thus $\tilde f$ is a minimal surface for a metric in the lifted conformal class.
\end{example}
The harmonic map equation can also be generalized to this setting.
In \cite{kokarev2009} the second fundamental form of a map $f:\Sigma\to M$ is defined 
for manifolds $\Sigma$ and $M$ with torsion-free connections $\nabla^\Sigma$ and $\nabla^M$.
If $\nabla$ is the induced connection on $T^*\Sigma\otimes f^*TM$, then the second fundamental form is just $\nabla df$.
If $\eta$ is a metric on $\Sigma$ then the \textit{tension} of the map can be defined as
\[\tau(\eta,\nabla^\Sigma,\nabla^M) = \trace_\eta \nabla df. \]
A map is \emph{psuedo-harmonic} if the tension field is zero.
We study the case where the domain $(\Sigma,\eta)$ is a Riemannian manifold with its Levi-Civita connection $\nabla^\eta$
and the target manifold $(M,c,D)$ is a Weyl manifold.
This is opposite of the case studied in \cite{kokarev2009}, where the domain is Weyl and the target is Riemannian.

\begin{defi}
A map $f:\Sigma\to M$ is \emph{Weyl-harmonic} if $\tau(\eta,\nabla^\eta,\nabla^D)=0$.
\end{defi}
From this point we only consider the case where $\Sigma$ has dimension two.
Using local isothermal coordinates on $\Sigma$ so that $\eta=e^{2\lambda}(dx^2+dy^2)$, the tension field is
\begin{equation}\label{tension} \tau(\eta,\nabla^\eta,\nabla^D)=e^{-2\lambda}(\nabla^D_{\partial_x}f_x+\nabla^D_{\partial_y}f_y), \end{equation}
where $f_x=df(\partial_x)$.
In terms of the complex coordinate $z=x+iy$ this is just
\begin{equation}\label{tensionz} \tau(\eta,\nabla^\eta,\nabla^D)=e^{-2\lambda}\nabla^D_{\partial_{\bar z}}f_z. \end{equation}
This can also be written more explicitly as
\begin{equation}\label{tensionex} \tau(\eta,\nabla^\eta,\nabla^D)=
e^{-2\lambda}(\nabla^{g_\mu}_{\partial_x}f_x+\nabla^{g_\mu}_{\partial_y}f_y+2\alpha_\mu(f_x)f_x+2\alpha_\mu(f_y)f_y-(|f_x|^2_{g_\mu}+|f_y|^2_{g_\mu})\alpha_\mu^{\sharp_{g_\mu}}). \end{equation}
From this we see that when $\Sigma$ has dimension 2
\[ \tau(e^{2u}\eta,\nabla^{e^{2u}\eta},\nabla^D)=e^{-2u}\tau(\eta,\nabla^\eta,\nabla^D). \]
Thus, in this case, the definition of Weyl-harmonic depends only on the conformal class of $\eta$.
We are also interested in the case where $f$ is weakly conformal.
\begin{defi}
A map $f:\Sigma\to M$ is \emph{weakly conformal} if it is conformal whenever $df\neq 0$.
\end{defi}
If $z=x+iy$ is a complex coordinate on $\Sigma$ so that $\eta=e^{2\lambda}(dx^2+dy^2)$, then the equations
\[ \eta(\partial_x,\partial_y) = 0 \qquad \text{ and } \qquad  \eta(\partial_x,\partial_x) = \eta(\partial_y,\partial_y) \]
are conformally invariant.  If $f$ is weakly conformal then this implies that
\[ c( f_x,f_y ) = 0 \qquad \text{ and } \qquad  c( f_x,f_x ) =  c( f_y,f_y ). \]
Extending the conformal inner product to be complex bilinear, these are equivalent to the equation
\begin{equation}  c( f_z,f_z ) =0, \end{equation}
where $f_z=\frac{1}{2}(f_x-if_y)$.
Any point where $df$ is not full rank is called a singular point.
A branch point $p$ is a singular point where in some neighborhood of $p$, $f_z=z^k Z$ and $Z_p\neq 0$.

\begin{prop}
If $f:\Sigma\to M$ is weakly conformal, Weyl-harmonic, and non-constant, then $df$ is rank 2 except at an isolated set of branch points.
\end{prop}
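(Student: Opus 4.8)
The plan is to reduce the statement to the behaviour of the $(1,0)$-part of $df$ and to use Weyl-harmonicity to make it ``holomorphic.'' Fix a complex coordinate $z=x+iy$ on a chart of $\Sigma$ and set $f_z=\frac12(f_x-if_y)$, a local section of $f^*TM\otimes\mathbb C$; globally $\partial f=f_z\,dz$ is a section of $T^{*(1,0)}\Sigma\otimes f^*TM\otimes\mathbb C$ whose zero set is coordinate-independent. Since $f_x,f_y$ are real, $df=0$ at a point precisely when $f_z=0$ there, so the singular points of $f$ are exactly the zeros of $f_z$. First I would check that $df$ has rank $2$ at every non-singular point: weak conformality gives $c(f_z,f_z)=0$, i.e. $c(f_x,f_y)=0$ and $c(f_x,f_x)=c(f_y,f_y)$, so wherever $f_z\neq0$ the vectors $f_x,f_y$ are nonzero and $c$-orthogonal, hence linearly independent (using that $c$ is positive definite). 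Thus it remains to understand the zero set of $f_z$.

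Next I would invoke Weyl-harmonicity. By (\ref{tensionz}) the equation $\tau(\eta,\nabla^\eta,\nabla^D)=0$ is equivalent to $\nabla^D_{\partial_{\bar z}}f_z=0$, where $\nabla^D$ now denotes the connection pulled back to $f^*TM$. In a local trivialization of $f^*TM\otimes\mathbb C$ this is a linear first-order elliptic system $\partial_{\bar z}\xi=A\xi$ for the components $\xi$ of $f_z$, with $A$ smooth (assembled from the Christoffel symbols of $\nabla^D$ along $f$, paired with $f_{\bar z}$). Equivalently, since $\Sigma$ has complex dimension one there are no $(0,2)$-forms, so the integrability condition is automatic and $\nabla^D_{\partial_{\bar z}}$ endows $f^*TM\otimes\mathbb C$ with the structure of a holomorphic vector bundle over $\Sigma$ (Koszul--Malgrange); with respect to it $\partial f$ is a holomorphic section. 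Passing to a local holomorphic frame, $f_z$ becomes a $\mathbb C^4$-valued holomorphic function $g=(g^1,\dots,g^4)$.

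The conclusion is then elementary. If the zero set of $f_z$ had an accumulation point, each $g^j$ would vanish on a set with an accumulation point in a holomorphic chart around it, hence $g\equiv0$ there; chaining charts along the connected surface $\Sigma$ (unique continuation) forces $f_z\equiv0$, so $f$ is constant, contrary to hypothesis. Hence the zeros of $f_z$ are isolated. Near such a zero $p$, centering the coordinate and the frame at $p$ and putting $k=\min_j\ord_0 g^j\ge1$, one gets $g=z^k Z$ with $Z$ holomorphic in the frame and $Z(0)\neq0$; transporting back, $f_z=z^k Z$ with $Z$ a smooth local section of $f^*TM\otimes\mathbb C$ and $Z_p\neq0$, which is exactly the definition of a branch point. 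So $df$ has rank $2$ away from the isolated set of branch points.

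The one genuinely analytic ingredient --- and the step I expect to be the main obstacle --- is the passage to a local holomorphic frame for $(f^*TM\otimes\mathbb C,\nabla^D_{\partial_{\bar z}})$, equivalently local solvability of $\bar\partial$, equivalently the Bers--Vekua similarity principle for the system $\partial_{\bar z}\xi=A\xi$; one must be a little careful about the regularity needed on $A$, but this is no issue here since $f$ is smooth (the Weyl-harmonic equation being elliptic). Once the local holomorphic model is available, the rank computation, the unique-continuation argument, and the branch-point normal form are all formal.
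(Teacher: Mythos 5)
Your proposal is correct, but it takes a genuinely different route from the paper. The paper treats the Weyl-harmonic equation as a second-order elliptic system for $f$ itself (Laplace equation plus terms quadratic in $df$) and quotes Aronszajn's unique continuation theorem together with the Hartman--Wintner theorem to get a leading homogeneous polynomial approximation $df = dh + o(|(x,y)|^{m-1})$, deferring the finer branch-point structure to Micallef--White. You instead exploit the fact that, once $f$ is known (and smooth), the equation $\nabla^D_{\partial_{\bar z}}f_z=0$ is a \emph{linear} first-order system in $f_z$, i.e.\ $f_z\,dz$ is a holomorphic section for the Koszul--Malgrange holomorphic structure that the $(0,1)$-part of $\nabla^D$ induces on $f^*TM\otimes\mathbb{C}$; in a local holomorphic frame the components of $f_z$ are literally holomorphic, so the identity theorem gives isolation of zeros and the normal form $f_z=z^kZ$, $Z_p\neq 0$, exactly matching the paper's definition of a branch point. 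Your route buys a cleaner and more self-contained argument: it avoids Hartman--Wintner and the appeal to Micallef--White, produces the branch-point normal form directly rather than only the isolation of zeros, and uses precisely the machinery (Koszul--Malgrange, and in spirit the Bers--Vekua similarity principle) that the paper deploys anyway in the proof of the adjunction formulas. What the paper's route buys in exchange is robustness at low regularity: Aronszajn/Hartman--Wintner apply to weak (e.g.\ $W^{2,2}$) solutions where one cannot immediately assert a smooth coefficient matrix $A$ or a smooth local holomorphic frame, whereas your argument needs $f$ smooth (or at least regular enough for the frame construction) --- harmless here, since the paper works in the smooth category, but worth stating as a hypothesis rather than deriving it parenthetically from ellipticity. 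Also make sure the rank-2 observation is stated in the right order: the claim that the singular set equals $\{f_z=0\}$ is a consequence of the conformality argument you give immediately afterwards, not of the reality of $f_x,f_y$ alone.
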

\begin{proof}
Like the harmonic map equation, the Weyl-harmonic map equation can be written as the Laplace equation plus terms quadratic in $df$.
Thus the hypotheses of Aronzajn's unique continuation theorem and the Hartman-Wintner theorem \cite{mcduff2012} are still satisfied in the Weyl-harmonic case.
As $f$ is non-constant, Aronzajn's theorem implies that $df$ is not zero on an open set.
By the Hartman-Wintner Theorem, at every point there is an integer $m\ge 1$ so that in an isothermal coordinate chart,
\[ f(x,y) = h(x,y) + o(|(x,y)|^m) \quad \text{and} \quad df(x,y) = dh(x,y) + o(|(x,y)|^{m-1}). \]
for some non-zero homogeneous degree $m$ polynomial $h$.
The zeros of $dh$ are isolated, so the zeros of $df$ must be as well.
In fact, $h$ must also be weakly conformal and harmonic, thus the only zeros of $dh$ are branch point singularities.
Details and further analysis of the structure of these branch points is contained in \cite{micallef95}.
\end{proof}

If $f:\Sigma\to M$ is an immersion with only branch point singularities, then it is called a \emph{branched immersion}.
When $f$ is a branched immersion the conformal class $f^*c$ on $\Sigma$ can be defined across the branch points and $f$ is weakly conformal when $f^*c=[\eta]$.
A branched immersion which is Weyl-minimal away from the branch points is called a \emph{branched Weyl-minimal immersion}.
\begin{prop}
If $f:\Sigma\to M$ is weakly conformal then $f$ is Weyl-harmonic if and only if it is a branched Weyl-minimal immersion.
\end{prop}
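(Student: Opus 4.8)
The plan is to show that on the open set where $df$ has rank two the tension field and the Weyl mean curvature agree up to a nonzero constant, and then to handle the isolated branch points by continuity. Since for maps out of a surface Weyl-harmonicity depends only on the conformal class $[\eta]$, and a weakly conformal $f$ has $[\eta]=f^*c$ on the immersed locus, I am free to compute $\tau$ using any convenient representative metric. So fix a length scale $\mu\in\Gamma(L)$, pass to the immersed locus $\Sigma_0=\{df\neq 0\}$ (which, by weak conformality, is exactly where $df$ has rank two, and which by the preceding proposition or by the definition of a branched immersion is the complement of an isolated set), and there take the domain metric to be $\eta=f^*g_\mu=\bar g_\mu$, the induced metric. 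This makes $f|_{\Sigma_0}$ an isometric immersion for $g_\mu$ and gives $\nabla^\eta=\bar\nabla^{\bar g_\mu}$, while $\Sigma_0$ carries the induced conformal structure $\bar c$ and Weyl derivative $\bar D$ of the Submanifold Geometry subsection.

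On $\Sigma_0$, identifying $df(Y)$ with the vector field $Y$ along $\Sigma$, the second fundamental form $\nabla df$ of the map $f$ (with respect to $\nabla^\eta$ on the domain and $\nabla^D$ on the target) decomposes as
\begin{equation*}
\nabla df(X,Y)=\nabla^D_XY-\nabla^\eta_XY=\bigl(\nabla^D_XY-\bar\nabla^{\bar D}_XY\bigr)+\bigl(\bar\nabla^{\bar D}_XY-\nabla^\eta_XY\bigr)=B^D(X,Y)+\bigl(\bar\nabla^{\bar D}_XY-\nabla^\eta_XY\bigr),
\end{equation*}
where $B^D$ is normal-valued (it equals $A_{g_\mu}-\langle\cdot,\cdot\rangle_{g_\mu}(\alpha_\mu^{\sharp_{g_\mu}})^\perp$) and the last bracket is tangent to $\Sigma$. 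Taking the $\eta$-trace, the normal part contributes $\trace_\eta B^D=2\mathbf{H}^D$ by (\ref{weylH}). For the tangential part, the formula for $\bar\nabla^{\bar D}$ gives $\bar\nabla^{\bar D}_XY-\nabla^\eta_XY=\bar\alpha_\mu(X)Y+\bar\alpha_\mu(Y)X-\langle X,Y\rangle_\eta\bar\alpha_\mu^{\sharp_\eta}$, and tracing this over an $\eta$-orthonormal frame of the two-dimensional $\Sigma$ yields $2\bar\alpha_\mu^{\sharp_\eta}-2\bar\alpha_\mu^{\sharp_\eta}=0$; here it is essential that $\dim\Sigma=2$. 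Hence on $\Sigma_0$ we have $\tau(\eta,\nabla^\eta,\nabla^D)=2\mathbf{H}^D$, so $\tau$ vanishes on $\Sigma_0$ if and only if $\mathbf{H}^D$ does.

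The two implications then follow quickly. If $f$ is weakly conformal and Weyl-harmonic, then by the preceding proposition it is non-constant with $df$ of rank two off an isolated set of branch points, hence a branched immersion; on $\Sigma_0$ the identity forces $\mathbf{H}^D=0$, so $f$ is branched Weyl-minimal. Conversely, if $f$ is a weakly conformal branched Weyl-minimal immersion, then $\mathbf{H}^D=0$ on $\Sigma_0$ gives $\tau=0$ there; since $f$ is a smooth map, its tension field $\tau$ is continuous and vanishes off the discrete branch locus, hence $\tau\equiv 0$ and $f$ is Weyl-harmonic. (Alternatively, near a branch point $f_z=z^kZ$ with $z^k$ holomorphic, so $\nabla^D_{\partial_{\bar z}}f_z=z^k\nabla^D_{\partial_{\bar z}}Z$ vanishes at $z=0$, giving $\tau=0$ at branch points directly.)

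I expect the only real obstacle to be the weight and length-scale bookkeeping that makes the identity $\tau=2\mathbf{H}^D$ independent of all choices involved — of $\mu$, of the conformal representative $\eta$, and of the induced density $\bar\mu$ on $\Sigma$ — together with the reduction of Weyl-harmonicity to a conformally invariant condition on $\Sigma$ so that this substitution is legitimate. The analytic core, namely that $f$ is a branched immersion with isolated branch points, has already been supplied by the previous proposition, so nothing new is needed there.
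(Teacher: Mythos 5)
Your proof is correct, and it reaches the paper's key identity by a somewhat different route. The paper's own proof is a direct computation: it plugs weak conformality into the explicit isothermal-coordinate expression (\ref{tensionex}) and observes that the quadratic $\alpha_\mu$-terms combine so that, away from branch points, $\tau(\eta,\nabla^\eta,\nabla^D)=e^{-2\lambda}(|f_x|^2_{g_\mu}+|f_y|^2_{g_\mu})\bigl(\mathbf{H}_{g_\mu}-(\alpha_\mu^{\sharp_{g_\mu}})^{\perp}\bigr)$, then compares with (\ref{weylH}). You instead exploit the two-dimensional conformal invariance of the Weyl-harmonic condition to take $\eta=f^*g_\mu$ on the immersed locus and decompose $\nabla df=B^D+(\bar\nabla^{\bar D}-\nabla^\eta)$, with the tangential difference tensor trace-free exactly when $\dim\Sigma=2$; this gives $\tau=2\mathbf{H}^D$ invariantly, which is the same identity the paper obtains in coordinates. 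What your packaging buys is transparency about where conformality and two-dimensionality enter (and it avoids redoing the coordinate algebra), while the paper's computation is shorter and does not require justifying the change of domain metric. You also supply a detail the paper leaves implicit, namely that in the converse direction $\tau=0$ extends across the isolated branch points (by continuity, or by $\nabla^D_{\partial_{\bar z}}f_z=z^k\nabla^D_{\partial_{\bar z}}Z$); that is a genuine, if small, improvement. One nitpick: the preceding proposition assumes non-constancy rather than concluding it, so your forward direction, like the paper's, implicitly excludes the constant map (which is weakly conformal and Weyl-harmonic but not a branched immersion); this is a boundary convention rather than a gap in your argument.
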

\begin{proof}
In this case, away from the branch points,
\[ \tau(\eta,\nabla^\eta,\nabla^D)=e^{-2\lambda}(|f_x|^2_{g_\mu}+|f_y|^2_{g_\mu})\left(\mathbf{H}_{g_\mu}-\left(\alpha_\mu^{\sharp_{g_\mu}}\right)^\perp\right). \]
Comparing this with equation (\ref{weylH}) we see that $\tau(\eta,\nabla^\eta,\nabla^D)=0$ if and only if $\mathbf{H}^D=0$.
\end{proof}

There is a splitting $TM=T_f\Sigma\oplus N_f\Sigma$, where $T_f\Sigma=df(T\Sigma)$ away from the branch points.
At a branch point $p$, $f_z=z^k Z$, $Z_p\neq 0$, and $T_f\Sigma=\mathrm{span}\{\mathrm{Re}(Z_p),\mathrm{Im}(Z_p)\}$.
In both cases $N_f\Sigma$ is the orthogonal complement of $T_f\Sigma$.
\begin{defi}
For $M$ oriented, the \emph{twistor lifts} of $f$, $\tilde f_\pm:\Sigma\to\mathcal{Z}_\pm$ are determined by two complex structures on $f^*TM$.
There are two orthogonal complex structures  $J_\pm$ which agree with the complex structure of $\Sigma$ on $T_f\Sigma$.
The complex structure $J_+$ preserves the orientation while $J_-$ is orientation reversing.
The corresponding weightless K\"ahler form is then 
\[ \tilde f_\pm = c(J_\pm\cdot,\cdot) \]
\end{defi}
The complex structures $J_\pm$ determine a splitting of $f^*TM\otimes\mathbb{C}=f^*T^{(1,0)}_\pm M\oplus f^*T^{(0,1)}_\pm M$.
\end{subsubsection}
\end{subsection}
\begin{subsection}{Conformally Almost-Hermitian Manifolds}
A conformally almost-Hermitian manifold $(M^4,c,J)$ is an almost-complex manifold with a conformal structure satisfying
\[ c(X,Y)=c(JX,JY). \]
The conformal K\"ahler form $\omega_c=c(J\cdot,\cdot)$ can be viewed as a 2-form with values in $L^2$.  
Then there is a unique Weyl derivative satisfying $d^D\omega_c = 0$.
Fixing $\mu\in\Gamma(L)$ we can define this Weyl derivative using the \emph{Lee form}
\[ \theta_\mu = J\delta_{g_\mu}\omega_\mu=-(\delta_{g_\mu}\omega_\mu)J, \]
with $\delta_{g_\mu}$ denoting the divergence and $\omega_{c}=\mu^2\omega_\mu$.
In terms of an orthonormal coframe $\{e^i\}$ of $g_\mu$ satisfying
\[\omega_\mu = e^1\wedge e^2 + e^3\wedge e^4, \]
and $a_i=\langle e_i, \delta_{g_\mu}\omega_\mu \rangle_{g_\mu}$ one can check that
\[ d\omega_\mu = a_1 e^2\wedge e^3\wedge e^4-a_2 e^1\wedge e^3\wedge e^4+a_3 e^1\wedge e^2\wedge e^4-a_4 e^1\wedge e^2\wedge e^3. \]
Then the Lee form is
\[ \theta_\mu=-J\star d\omega_\mu = J (a_1 e^1+a_2 e^2+a_3 e^3+a_4 e^4) = a_1 e^2-a_2 e^1+a_3 e^4-a_4 e^3,\]
furthermore
\[ \theta_\mu\wedge\omega_\mu = d\omega_\mu. \]
Then for Weyl derivative $D=d+\alpha_\mu$,
\[ d^D\omega = d^D \mu^2\omega_\mu = 2\mu D(\mu)\omega+\mu^2 d\omega=2\mu^2\alpha_\mu\wedge\omega_\mu+\mu^2\theta_\mu\wedge\omega_\mu.\]
The canonical Weyl derivative of $(M,c,J)$ is then determined by setting $\alpha_\mu=-\frac{1}{2}\theta_\mu$.
The induced connection on $TM$ is given by
\[ \nabla^D_X Y = \nabla^{g_\mu}_X Y -\frac{1}{2} \theta_\mu(X)Y-\frac{1}{2}\theta_\mu(Y)X+\frac{1}{2}\langle X,Y\rangle_{g_\mu}\theta_\mu^{\sharp_{g_\mu}}. \]
The \textit{Nijenhuis Tensor} of $J$ is given by 
\[ N(X,Y)=[X,Y]+J[JX,Y]+J[X,JY]-[JX,JY]. \]
This is $J$ antilinear in both slots, that is $N(JX,Y)=-JN(X,Y)=N(X,JY)$.
For any vector field, $X$, $\nabla^D_X J$ is also $J$ antilinear.
\begin{prop}\label{dwJN} For any almost-Hermitian manifold
\[ \langle N(X,Y), JZ \rangle_{g_\mu} = d\omega_\mu(X,Y,Z)-d\omega_\mu(JX,JY,Z)-2\langle (\nabla^{g_\mu}_Z J)X,Y \rangle_{g_\mu}. \]
\end{prop}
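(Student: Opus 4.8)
The plan is to reduce both sides of the identity to expressions in the covariant derivative $\nabla^{g_\mu}J$ of the almost-complex structure and then compare. To lighten notation I would write $\nabla=\nabla^{g_\mu}$, $\langle\,\cdot\,,\,\cdot\,\rangle=\langle\,\cdot\,,\,\cdot\,\rangle_{g_\mu}$, and $\omega=\omega_\mu$, and use freely that $\nabla$ is torsion-free and metric, that $J$ is skew-adjoint with $J^2=-\mathrm{id}$, and (differentiating $J^2=-\mathrm{id}$) that $(\nabla_WJ)J=-J(\nabla_WJ)$ for every vector field $W$. Since the asserted formula is tensorial in $X,Y,Z$, it suffices to establish it for vector fields.

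First I would record two preliminary identities. Because $\nabla$ is torsion-free, $d\omega$ is the alternation of $\nabla\omega$, and because $\nabla g=0$ one has $(\nabla_X\omega)(Y,Z)=\langle(\nabla_XJ)Y,Z\rangle$; hence
\[
d\omega(X,Y,Z)=\langle(\nabla_XJ)Y,Z\rangle-\langle(\nabla_YJ)X,Z\rangle+\langle(\nabla_ZJ)X,Y\rangle .
\]
Next, expanding the four Lie brackets in the definition of $N$ via $[A,B]=\nabla_AB-\nabla_BA$, every term in which $\nabla$ differentiates $X$ or $Y$ cancels, leaving
\[
N(X,Y)=J(\nabla_XJ)Y-J(\nabla_YJ)X-(\nabla_{JX}J)Y+(\nabla_{JY}J)X .
\]

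Then I would pair $N(X,Y)$ with $JZ$. Since $J$ is an isometry, $\langle J(\nabla_XJ)Y,JZ\rangle=\langle(\nabla_XJ)Y,Z\rangle$, so the first two terms contribute $d\omega(X,Y,Z)-\langle(\nabla_ZJ)X,Y\rangle$ by the first displayed identity. For the remaining two terms I would substitute $(X,Y)\mapsto(JX,JY)$ in that same identity and simplify $(\nabla_WJ)(JV)=-J(\nabla_WJ)V$ and $(\nabla_ZJ)(JX)=-J(\nabla_ZJ)X$ using the anticommutation and skew-adjointness of $J$; this yields
\[
d\omega(JX,JY,Z)=\langle(\nabla_{JX}J)Y,JZ\rangle-\langle(\nabla_{JY}J)X,JZ\rangle-\langle(\nabla_ZJ)X,Y\rangle .
\]
Comparing, one finds $d\omega(X,Y,Z)-d\omega(JX,JY,Z)=\langle N(X,Y),JZ\rangle+2\langle(\nabla_ZJ)X,Y\rangle$, which is exactly the claimed identity after rearranging.

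I do not expect a genuine obstacle here: the whole argument is a short tensorial computation, and the only care required is sign bookkeeping — in particular the two uses of skew-adjointness of $J$ and the anticommutation $(\nabla_WJ)J=-J(\nabla_WJ)$ — together with checking that the exterior-derivative convention used to define $d\omega_\mu$ elsewhere in the paper is the alternation of $\nabla\omega$ (it is, for the standard convention). As a consistency check, when $J$ is parallel both sides vanish, and when $d\omega=0$ one recovers the usual almost-K\"ahler identity $\langle N(X,Y),JZ\rangle=-2\langle(\nabla_ZJ)X,Y\rangle$.
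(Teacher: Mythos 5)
Your proof is correct, but it takes a different route from the paper: the paper does not prove Proposition \ref{dwJN} at all, it simply cites Proposition 4.2 of Kobayashi--Nomizu ``with different conventions,'' whereas you give a self-contained tensorial computation. I checked your two ingredients against the paper's conventions ($\omega_\mu(X,Y)=\langle JX,Y\rangle_{g_\mu}$, the factor-free exterior derivative, and the paper's sign of $N$, which is the negative of the more common one): expanding the four brackets with the torsion-free $\nabla^{g_\mu}$ does give $N(X,Y)=J(\nabla_XJ)Y-J(\nabla_YJ)X-(\nabla_{JX}J)Y+(\nabla_{JY}J)X$, the cyclic-sum formula $d\omega(X,Y,Z)=\langle(\nabla_XJ)Y,Z\rangle-\langle(\nabla_YJ)X,Z\rangle+\langle(\nabla_ZJ)X,Y\rangle$ is right (using skew-adjointness of $\nabla_YJ$), and the substitution $(X,Y)\mapsto(JX,JY)$ together with $(\nabla_WJ)J=-J(\nabla_WJ)$ yields exactly your second display, so the three pieces recombine to the stated identity. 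What your approach buys is precisely the point you flag at the end: since the cited source uses a different Nijenhuis sign and a $\tfrac13$-normalized exterior derivative, a reader must translate conventions to recover the paper's formula, and your direct derivation verifies the statement as written, in the paper's own conventions, with no external input; the paper's citation is of course shorter. The only care needed, as you note, is the normalization of $d$, and your choice is the one consistent with the rest of the paper (e.g.\ $\theta_\mu\wedge\omega_\mu=d\omega_\mu$).
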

This is proposition 4.2 in \cite{kobayashi1969} with different conventions.
The corresponding formula for conformally almost-Hermitian manifolds with a Weyl derivative is
\[ c(N(X,Y), JZ) = d^D\omega_c(X,Y,Z)-d^D\omega_c(JX,JY,Z)-2c( (\nabla^D_Z J)X,Y ). \]
\begin{prop}\label{weylJN} For any conformally almost-Hermitian manifold with canonical Weyl derivative $D$,
\[ c(N(X,Y), JZ) = -2c( (\nabla^D_Z J)X,Y ). \]
\end{prop}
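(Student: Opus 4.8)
The plan is to deduce Proposition \ref{weylJN} from the displayed Weyl analogue of the Kobayashi--Nomizu identity,
\[ c(N(X,Y), JZ) = d^D\omega_c(X,Y,Z)-d^D\omega_c(JX,JY,Z)-2c( (\nabla^D_Z J)X,Y ), \]
together with the defining property of the canonical Weyl derivative. Recall from the construction earlier in this section that $\alpha_\mu$ is chosen to be $-\tfrac12\theta_\mu$ precisely so that the terms $2\mu^2\alpha_\mu\wedge\omega_\mu$ and $\mu^2\theta_\mu\wedge\omega_\mu$ occurring in $d^D\omega_c$ cancel; thus the canonical Weyl derivative is the unique one with $d^D\omega_c=0$. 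Substituting $d^D\omega_c=0$ into the identity above kills both three-form terms and leaves exactly $c(N(X,Y),JZ)=-2c((\nabla^D_Z J)X,Y)$, which is the assertion.

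For completeness one should also establish the displayed Weyl analogue itself, since it is only asserted in the text. I would obtain it from Proposition \ref{dwJN} by fixing $\mu\in\Gamma(L)$, writing $\omega_c=\mu^2\omega_\mu$ and $\nabla^D_Z W=\nabla^{g_\mu}_Z W-\tfrac12\theta_\mu(Z)W-\tfrac12\theta_\mu(W)Z+\tfrac12\langle Z,W\rangle_{g_\mu}\theta_\mu^{\sharp_{g_\mu}}$, then expanding $(\nabla^D_Z J)X=\nabla^D_Z(JX)-J\nabla^D_Z X$ to express $(\nabla^D_Z J)X$ as $(\nabla^{g_\mu}_Z J)X$ plus explicit $\theta_\mu$-correction terms. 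Combining this with $d^D\omega_c=\mu^2(d\omega_\mu+2\alpha_\mu\wedge\omega_\mu)$ and the Lee identity $\theta_\mu\wedge\omega_\mu=d\omega_\mu$, one checks that the correction terms on the two sides match, so that the Riemannian identity of Proposition \ref{dwJN} is converted into the stated conformal one. Since all terms are tensorial in $X,Y,Z$ it suffices to verify this on frame vectors, and working in a local $g_\mu$-unitary frame adapted to $J$ (with the antilinearity of $N$ and of $\nabla^D J$ reducing the number of cases) turns the verification into a short symbol manipulation; alternatively, the proof of Proposition~4.2 in \cite{kobayashi1969} applies verbatim, as it uses only that the connection is torsion-free and compatible with the conformal metric, both of which hold for $\nabla^D$ relative to $c$.

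The only genuine computation here is the bookkeeping of the Lee-form corrections in the second step, and it is of exactly the same character as the one already carried out to pin down the canonical Weyl derivative; once the Weyl Kobayashi--Nomizu identity is in hand, Proposition \ref{weylJN} follows with no further work, because the canonical connection was designed to annihilate $d^D\omega_c$.
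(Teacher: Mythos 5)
Your proposal is correct and follows essentially the same route as the paper: the paper's (implicit) proof is exactly to combine the displayed Weyl analogue of Proposition \ref{dwJN} with the defining property $d^D\omega_c=0$ of the canonical Weyl derivative, which is what you do. Your additional sketch of how to establish the Weyl analogue itself (via the Lee-form bookkeeping, or by noting the Kobayashi--Nomizu argument only needs a torsion-free connection compatible with the conformal metric) is sound and simply fills in a step the paper leaves as an assertion.
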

\begin{coro}
The global section $s:(M,J)\to (\mathcal{Z}_+,\mathcal{J}_+)$ defined by $\omega_c$ is holomorphic.
\end{coro}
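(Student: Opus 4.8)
The plan is to verify directly that $ds$ intertwines the two complex structures, i.e. that $ds\circ J=\mathcal{J}_+\circ ds$. Since $s$ is the section determined by $\omega_c$, equation (\ref{vhConn}) gives $ds(X)=X^h+(\nabla^D_X\omega_c)^v$. The vertical term makes sense because $\nabla^D$ preserves $c$ (hence the conformal Hodge star and the splitting $\Lambda^2=\Lambda^2_+\oplus\Lambda^2_-$), so $\nabla^D_X\omega_c$ stays in $L^2\Lambda^2_+T^*M$, and since $\omega_c$ has constant $c$-norm it is $c$-orthogonal to $\omega_c$; hence $\nabla^D_X\omega_c\in L^2\Lambda^{2,J_+}_-T^*M$. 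Using (\ref{calJh}) for the horizontal part, (\ref{calJv}) for the vertical part, and that $X\mapsto X^h$, $\beta\mapsto\beta^v$ are injective with $V\oplus H$ a direct sum, holomorphicity of $s$ is equivalent to the single identity $\nabla^D_{JX}\omega_c = J\cdot(\nabla^D_X\omega_c)$ for all $X$, where $(J\beta)(\cdot,\cdot)=\beta(J\cdot,\cdot)$.

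Next I would convert this into a statement about $\nabla^D J$. Because $\nabla^D c=0$, a short computation gives
\[ (\nabla^D_X\omega_c)(Y,Z)=c\big((\nabla^D_X J)Y,Z\big), \]
so the identity above becomes $(\nabla^D_{JX}J)Y=(\nabla^D_X J)(JY)$. Since $\nabla^D_X J$ anticommutes with $J$ in its argument (it is $J$-antilinear, as noted after the definition of the Nijenhuis tensor), the right-hand side equals $-J(\nabla^D_X J)Y$, so the claim reduces to showing that $X\mapsto\nabla^D_X J$ is $J$-antilinear as well: $\nabla^D_{JX}J=-J\circ(\nabla^D_X J)$.

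Finally I would read this off Proposition \ref{weylJN}, $c(N(X,Y),JZ)=-2c((\nabla^D_Z J)X,Y)$. Replacing $Z$ by $JZ$ and using $J^2=-1$ gives $c((\nabla^D_{JZ}J)X,Y)=\tfrac12 c(N(X,Y),Z)$. On the other hand $c(-J(\nabla^D_Z J)X,Y)=c((\nabla^D_Z J)X,JY)$, and applying the proposition with $Y$ replaced by $JY$, together with the $J$-antilinearity $N(X,JY)=-JN(X,Y)$ and the $J$-invariance of $c$, shows this also equals $\tfrac12 c(N(X,Y),Z)$; comparing and using non-degeneracy of $c$ yields $(\nabla^D_{JZ}J)X=-J(\nabla^D_Z J)X$. (Equivalently, in $J$-types, the only component of $\nabla^D J$ obstructing holomorphicity sends $T^{(1,0)}M$ into $\Lambda^{0,2}_+$, and Proposition \ref{weylJN} identifies it with a pairing of $N\big(T^{(0,1)}M,T^{(0,1)}M\big)\subseteq T^{(1,0)}M$ against $JT^{(1,0)}M=T^{(1,0)}M$, which vanishes because $c$ pairs $T^{(1,0)}M$ only with $T^{(0,1)}M$.) There is no genuine difficulty beyond keeping the conventions straight — which eigenspace of $\mathcal{J}_+$ on the fibre is $\Lambda^{2,0}_+$ versus $\Lambda^{0,2}_+$, and the exact antilinearity signs for $N$ and $\nabla^D J$ — so I would state those carefully and leave the remaining algebra to the reader.
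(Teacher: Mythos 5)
Your proposal is correct and follows essentially the paper's own route: both reduce holomorphicity of $s$ via (\ref{vhConn})--(\ref{calJv}) to the identity $\nabla^D_{JZ}\omega_c=\mathcal{J}_+(\nabla^D_Z\omega_c)$ and verify it from Proposition \ref{weylJN} together with the $J$-antilinearity of $N$ and the $J$-invariance of $c$. The only cosmetic difference is that you phrase the computation in terms of $\nabla^D J$ (antilinearity in the direction slot) while the paper works directly with $\nabla^D\omega_c$ and $N$; the content is the same.
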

\begin{proof}
Letting $X,Y,Z\in TM$, by the proposition,
\[ (\nabla^D_Z\omega_c)(X,Y)=-\frac{1}{2}c(N(X,Y), JZ), \]
and by the symmetries of the Nijenhuis tensor,
\[ (\nabla^D_{JZ}\omega_c)(X,Y)=\frac{1}{2}c(N(X,Y), Z)=\frac{1}{2}c(JN(X,Y), JZ)=-\frac{1}{2}c(N(JX,Y), JZ). \]
Therefore $(\nabla^D_{JZ}\omega_c)=\mathcal{J}_+(\nabla^D_Z\omega_c)$, and by equations (\ref{vhConn}), (\ref{calJh}), and (\ref{calJv})
\[ ds(JZ) = (JZ)^h + (\nabla^D_{JZ}\omega_c)^v=\mathcal{J}_+Z^h+\mathcal{J}_+(\nabla^D_Z\omega_c)^v=\mathcal{J}_+ds(Z). \]
\end{proof}

\end{subsection}
\end{section}

\begin{section}{Main Theorems}

\begin{subsection}{Twistor Correspondence}

Following \cite{eells1985} we now show there is a correspondence between weakly conformal Weyl-harmonic maps and non-vertical $\mathcal{J}_\pm$-holomorphic maps into the weightless twistor space with complex structure given by (\ref{calJh}) and (\ref{calJv}).

The twistor lifts of a weakly conformal map $f:\Sigma\to M$ are given by 
\[ \tilde f_\pm = \left(\frac{(1\pm \star)f_z \wedge f_{\bar z}}{i c(f_z,f_{\bar z})}\right)^{\flat_c}, \]
where $i=\sqrt{-1}$. The natural isomorphism $\flat_c:L^{-1}TM\to LT^*M$ preserves weights, but interchanges the holomorphic and anti-holomorphic spaces.
\begin{thm}
For any Weyl manifold $(M,c,D)$, a weakly conformal map $f:\Sigma\to M$ is Weyl-harmonic if and only if the twistor lifts $\tilde f_\pm:\Sigma\to \mathcal{Z}_\pm$ are $\mathcal{J}_\pm$-holomorphic.
\end{thm}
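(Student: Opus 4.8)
The plan is to work in a local isothermal coordinate $z = x+iy$ on $\Sigma$ with $\eta = e^{2\lambda}|dz|^2$, and to compute $d\tilde f_\pm$ in terms of the horizontal-vertical decomposition (\ref{vhConn}) using a local section $s_\pm$ of $\mathcal Z_\pm$ through the point $\tilde f_\pm(p)$. By definition of $\mathcal J_\pm$ in (\ref{calJh})--(\ref{calJv}), $\tilde f_\pm$ is $\mathcal J_\pm$-holomorphic if and only if $d\tilde f_\pm(\partial_{\bar z})$ lies in the $(0,1)$-eigenspace of $\mathcal J_\pm$; equivalently, writing $d\tilde f_\pm(\partial_z) = X^h + B^v$ with $X = f_z$ (this identification of the horizontal part with $f_z$ under $d\pi$ is immediate, since $\pi\circ\tilde f_\pm = f$), we need to check two conditions: (i) the \emph{horizontal} part is of type $(1,0)$, and (ii) the \emph{vertical} part is of type $(1,0)$. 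First I would dispatch the horizontal condition. The twistor lift is precisely arranged so that the almost-complex structure $J_\pm$ associated to $\tilde f_\pm(p)$ sends $f_z$ to $if_z$ (this is the content of the formula $\tilde f_\pm = \bigl((1\pm\star)f_z\wedge f_{\bar z}/(ic(f_z,f_{\bar z}))\bigr)^{\flat_c}$ together with weak conformality $c(f_z,f_z)=0$, so $f_z$ spans $f^*T^{(1,0)}_\pm M$). Hence $(J_\pm f_z)^h = (if_z)^h = i f_z^h$, so the horizontal part is automatically $(1,0)$ — this holds for \emph{any} weakly conformal $f$ and uses no harmonicity.

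Next, and this is the crux, I would analyze the vertical part. From (\ref{vhConn}) we have $ds_\pm(\partial_z) = (f_z)^h + (\nabla^D_{f_z}\sigma_\pm)^v$, and differentiating the relation $\tilde f_\pm = s_\pm\circ(\text{section data})$ — more precisely writing $\tilde f_\pm(w) = $ the point of the fiber encoded by the weightless K\"ahler form $\sigma_\pm(w)$ along $f$ — one gets that the vertical part of $d\tilde f_\pm(\partial_z)$ is $\bigl(\nabla^D_{f_z}\sigma_\pm\bigr)^v$ computed in the connection, where now $\sigma_\pm$ denotes the actual pulled-back K\"ahler form field $\tilde f_\pm$ along $f$, minus the vertical part of $ds_\pm$; choosing $s_\pm$ to osculate $\tilde f_\pm$ to first order at $p$ in the horizontal directions, the vertical part of $d\tilde f_\pm(\partial_z)$ at $p$ is exactly $\bigl(\nabla^D_{f_z}\tilde f_\pm - (\text{horizontal adjustment})\bigr)^v$; the clean statement is that the vertical component is represented by the $J_\pm$-anti-invariant 2-form $(\nabla^D_{\partial_z}\tilde f_\pm)$ projected onto $L^2\Lambda^{2,J_\pm}_-T^*M$. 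The $(1,0)$-condition on this vertical vector, under the identification $J_\pm\beta = \beta(J_\pm\cdot,\cdot)$, becomes the requirement that the projection of $\nabla^D_{\partial_z}\tilde f_\pm$ onto the anti-invariant forms lies in $L^2\Lambda^{2,0}_\pm T^*M$, i.e. its $\Lambda^{0,2}_\pm$-component vanishes.

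I would then make this explicit by contracting. Using $\tilde f_\pm(X,Y) = \tfrac{1}{ic(f_z,f_{\bar z})}\bigl[(1\pm\star)(f_z\wedge f_{\bar z})\bigr](X,Y)$ and computing $(\nabla^D_{\partial_{\bar z}}\tilde f_\pm)(f_z,\,\cdot\,)$ — contracting with $f_z$ in the first slot kills the $(1,0)$-part and isolates the offending component — one finds after using $\nabla^D c = 0$ (so $\nabla^D$ commutes with $\flat_c$ and with $\star$) that the obstruction is proportional to $c(\nabla^D_{\partial_{\bar z}} f_z,\,\cdot\,)$ paired against a normal $(0,1)$-vector, i.e. proportional to the normal projection of $\nabla^D_{\partial_{\bar z}}f_z$. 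By (\ref{tensionz}) this normal projection is exactly $e^{2\lambda}$ times the tension field $\tau(\eta,\nabla^\eta,\nabla^D)$ (the tangential part of $\nabla^D_{\partial_{\bar z}}f_z$ is absorbed, being $(1,0)$ along $f$). The key computational checks are: (a) weak conformality $c(f_z,f_z)=0$ ensures the $\Lambda^{2,0}$ and $\Lambda^{0,2}$ decomposition is as claimed and that $f_z$ genuinely spans the $(1,0)$-line, and (b) $\nabla^D$ preserving $c$ lets all the conformal-star and musical operations pass through the derivative. Therefore $d\tilde f_\pm(\partial_{\bar z})$ is $(0,1)$ for both its horizontal and vertical parts precisely when $\tau(\eta,\nabla^\eta,\nabla^D)=0$, which is the definition of Weyl-harmonic, completing the equivalence.

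The main obstacle I anticipate is the careful bookkeeping in the vertical part: one must correctly identify the vertical component of $d\tilde f_\pm$ as a covariant derivative of the pulled-back K\"ahler form (not merely its exterior derivative), handle the $L^2$-density weight consistently through $\flat_c$ and $\star$, and verify that the $(1,0)$-tangential portion of $\nabla^D_{\partial_{\bar z}}f_z$ does not contribute to the $\Lambda^{0,2}_\pm$-part of the anti-invariant projection — this last point is where weak conformality is used a second time. Everything else (the horizontal condition, the commuting of $\nabla^D$ with the conformal structure) is formal once the setup of Section 2 is in hand. This argument is the direct Weyl-geometric analogue of the Eells--Salamon computation in \cite{eells1985}, with the Levi-Civita connection replaced by $\nabla^D$ throughout and with the metric replaced by the conformal metric $c$ valued in $L^2$.
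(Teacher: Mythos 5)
Your proposal is correct and follows essentially the same route as the paper: decompose $d\tilde f_\pm(\partial_z)$ into horizontal and vertical parts via (\ref{vhConn}), note the horizontal part is automatically of type $(1,0)$ by the construction of the lift, reduce holomorphicity to the vanishing of the wrong-type component of $\nabla^D_{\partial_z}\tilde f_\pm$, and identify that obstruction with the tension (\ref{tensionz}) using weak conformality and $\nabla^D c=0$. The only point to tighten is your remark that the tangential part of $\nabla^D_{\partial_{\bar z}}f_z$ is ``$(1,0)$ along $f$'' (a real vector cannot have a purely $(1,0)$ tangential part unless it vanishes); the paper handles this residual term by writing $(\nabla^D_{\partial_z}f_{\bar z})^{(1,0)}_\pm=kf_z$, pairing with $f_{\bar z}$, and using $\nabla^D c=0$ together with $c(f_{\bar z},f_{\bar z})=0$ and reality to conclude $k=0$ and hence $\nabla^D_{\partial_z}f_{\bar z}=0$.
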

\begin{proof}
The twistor lifts $\tilde f_\pm$ are $\mathcal{J}_\pm$-holomorphic provided $d\tilde f_\pm (\partial_z) \in T^{(1,0)}\mathcal{Z}_\pm$.
We have
\[ d\tilde f_\pm (\partial_z) = (f_z)^h + \left(\nabla^D_{\partial_z}\tilde f_\pm\right)^v, \]
and since $f_z\in T^{(1,0)}_\pm M$, we have $(f_z)^h \in H(T^{(1,0)}\mathcal{Z}_\pm)$.
Thus all that is required is $\nabla^D_{\partial_z}\tilde f_\pm \in L^2\Lambda^{(2,0)}T^*M$.
We find that
\begin{align*}
(\nabla^D_{\partial_z}\tilde f_\pm)^{(0,2)}_\pm & = \left(\frac{\sqrt{2}f_z \wedge (\nabla^D_{\partial_z}f_{\bar z})^{(1,0)}_\pm}{i c(f_z,f_{\bar z})}\right)^{\flat_c}, \\
(\nabla^D_{\partial_z}\tilde f_\pm)^{(1,1)}_\pm & = 0, \\
(\nabla^D_{\partial_z}\tilde f_\pm)^{(2,0)}_\pm & = \left(\frac{\sqrt{2}(\nabla^D_{\partial_z}f_z)^{(0,1)}_\pm \wedge f_{\bar z}}{i c(f_z,f_{\bar z})}\right)^{\flat_c}. 
\end{align*}
It follows that $\tilde{f}_\pm$ is pseudo-holomorphic map if and only if $(\nabla^D_{\partial_z}f_{\bar z})^{(1,0)}_\pm=k f_z$, for some function $k$.
Taking the conformal inner-product with $f_{\bar z}$ gives
\[ c((\nabla^D_{\partial_z}f_{\bar z})^{(1,0)}_\pm,f_{\bar z})=k c(f_z,f_{\bar z}). \]
Since $f_{\bar z}\in T^{(0,1)}_\pm M$ this is just
\[ c(\nabla^D_{\partial_z}f_{\bar z},f_{\bar z})=k c(f_z,f_{\bar z}),\]
and since $c$ is $\nabla^D$ parallel we have
\[ \nabla^D_{\partial_z}c(f_{\bar z},f_{\bar z})=2k c(f_z,f_{\bar z}).\]
For $f$ weakly conformal, this shows that $k=0$.  Therefore $\tilde f_\pm$ is $\mathcal{J}$-holomorphic if and only if 
$(\nabla^D_{\partial_z}f_{\bar z})^{(1,0)}_\pm=0$, but since $\nabla^D_{\partial_z}f_{\bar z}$ is real, this can only be true when it is zero.
\end{proof}
\begin{coro}
There is a one-to-one correspondence between weakly conformal Weyl-harmonic maps to $(M,c,D)$ and non-vertical $\mathcal{J}_\pm$-holomorphic maps to the twistor space.
\end{coro}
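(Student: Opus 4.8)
The plan is to package the theorem just proved into the stated bijection by exhibiting the two maps of the correspondence and checking they are mutually inverse; the only work beyond the theorem is to control the branch locus and to recognize a non-vertical $\mathcal{J}_\pm$-holomorphic curve as the twistor lift of its own projection.

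\textbf{From Weyl-harmonic maps to twistor curves.} Given a non-constant weakly conformal Weyl-harmonic $f:\Sigma\to M$, the propositions above show $f$ is a branched immersion with isolated branch set, so the $c$-orthogonal almost-complex structures $J_\pm$ on $f^*TM$ agreeing with the complex structure of $\Sigma$ on $T_f\Sigma$ — hence the twistor lifts $\tilde f_\pm:\Sigma\to\mathcal{Z}_\pm$ — are defined on all of $\Sigma$. Off the branch set the theorem gives $d\tilde f_\pm(\partial_z)\in T^{(1,0)}\mathcal{Z}_\pm$; since $\tilde f_\pm$ is continuous this condition persists across the branch points. Moreover $\tilde f_\pm$ is non-vertical: in $d\tilde f_\pm(\partial_z)=(f_z)^h+(\nabla^D_{\partial_z}\tilde f_\pm)^v$ the horizontal component is $f_z$, nonzero on the dense complement of the branch set, so $\pi\circ\tilde f_\pm=f$ is non-constant.

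\textbf{From twistor curves to Weyl-harmonic maps.} Let $\tilde g:\Sigma\to\mathcal{Z}_\pm$ be non-vertical and $\mathcal{J}_\pm$-holomorphic, and put $f:=\pi\circ\tilde g$. Writing $d\tilde g(\partial_z)=(f_z)^h+(\text{vertical})$, holomorphicity forces $(f_z)^h\in H(T^{(1,0)}\mathcal{Z}_\pm)$, i.e. $f_z\in T^{(1,0)}_\pm M$ for the $c$-orthogonal almost-complex structure $J$ determined by $\tilde g(z)$. A $c$-orthogonal $J$ has $c$-isotropic $(1,0)$-space, so $c(f_z,f_z)=0$ and $f$ is weakly conformal; non-verticality of $\tilde g$ makes $f$ non-constant. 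At any point with $f_z\neq0$, the relation $f_z\in T^{(1,0)}_\pm M$ says precisely that $J$ agrees with the complex structure of $\Sigma$ on $T_f\Sigma=\mathrm{span}\{f_x,f_y\}$ with the orientation selected by $\mathcal{Z}_+$ versus $\mathcal{Z}_-$, so $\tilde g=\tilde f_\pm$ there. Applying the theorem on the open set $\{df\neq0\}$ (where its proof is pointwise) shows $f$ is Weyl-harmonic there; non-verticality forces $\{df=0\}$ to have empty interior, so $f$ is Weyl-harmonic on all of $\Sigma$, hence by the propositions above a non-constant weakly conformal branched Weyl-minimal immersion whose twistor lift is $\tilde g$.

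\textbf{Inverse map and main obstacle.} The two assignments are mutually inverse: $\pi\circ\tilde f_\pm=f$ by construction, and the preceding paragraph identifies a non-vertical $\mathcal{J}_\pm$-holomorphic $\tilde g$ with the twistor lift of $\pi\circ\tilde g$. The step I expect to require the most care is exactly the branch-point bookkeeping together with the precise use of "non-vertical": one must check that the explicit lift $\bigl(\tfrac{(1\pm\star)f_z\wedge f_{\bar z}}{ic(f_z,f_{\bar z})}\bigr)^{\flat_c}$ extends continuously across the zeros of $c(f_z,f_{\bar z})$ — using the local normal form $f_z=z^kZ$ with $Z_p\neq0$, so that numerator and denominator share the factor $|z|^{2k}$ — and that for a $\mathcal{J}_\pm$-holomorphic curve non-verticality at one point propagates, via unique continuation for pseudoholomorphic curves, to a dense open set, so that $\{df=0\}$ is discrete and the identification $\tilde g=\tilde f_\pm$ off that set extends to all of $\Sigma$.
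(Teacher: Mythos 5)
Your proposal is correct and follows essentially the same route as the paper's (very terse) proof: the forward direction is the theorem itself, and for the converse you project a non-vertical $\mathcal{J}_\pm$-holomorphic curve, get weak conformality from the $c$-isotropy of the $(1,0)$-space of the complex structure determined by the curve, and get Weyl-harmonicity by recognizing the curve as the twistor lift of its projection and applying the theorem. The extra bookkeeping you supply (extension of the lift across branch points, density of $\{df\neq 0\}$ via unique continuation) fills in details the paper leaves implicit but does not change the argument.
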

\begin{proof}
It only remains to show that for a non-vertical $\mathcal{J}$-holomorphic curve, $\phi:\Sigma\to \mathcal{Z}_\pm$ 
the projection $\bar{\phi}:\Sigma\to M$ is weakly conformal and Weyl-harmonic.
It is clearly weakly conformal as $\bar\phi_z$ is holomorphic with respect to the complex structure defined by $\phi$, which implies $c(\bar\phi_z,\bar\phi_z)=0$.
It is Weyl-harmonic as $\phi$ is its twistor lift and is $\mathcal{J}$-holomorphic.
\end{proof}
\begin{coro}
The $J$-holomorphic curves $f:\Sigma \to M$ are weakly conformal and Weyl-harmonic. 
\end{coro}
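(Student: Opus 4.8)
The plan is to deduce this from the two corollaries just established, by composing $f$ with the holomorphic section $s\colon(M,J)\to(\mathcal{Z}_+,\mathcal{J}_+)$ determined by $\omega_c$. A constant map is weakly conformal and has vanishing tension, so assume $f$ is non-constant and write $j$ for the complex structure on $(\Sigma,[\eta])$.

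First I would note that $\phi:=s\circ f\colon\Sigma\to\mathcal{Z}_+$ is $\mathcal{J}_+$-holomorphic: since $f$ is $J$-holomorphic, $df\circ j=J\circ df$, and since $s$ is holomorphic, $ds\circ J=\mathcal{J}_+\circ ds$, whence $d\phi\circ j=ds\circ df\circ j=ds\circ J\circ df=\mathcal{J}_+\circ ds\circ df=\mathcal{J}_+\circ d\phi$. Because $s$ is a section of the twistor projection, the projection $\bar\phi$ equals $f$, which is non-constant; hence $\phi$ is non-vertical. Now the correspondence corollary applies: the projection of a non-vertical $\mathcal{J}_+$-holomorphic curve is weakly conformal and Weyl-harmonic, and that projection is $\bar\phi=f$.

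There is essentially no obstacle; the substance lies in the holomorphicity of $s$ and in the twistor correspondence, and this step is bookkeeping. The only points meriting a word of care are disposing of the constant case and confirming that $s\circ f$ is non-vertical. As a side remark, weak conformality can also be seen directly: $J$-holomorphicity gives $f_y=Jf_x$, and the $J$-invariance of $c$ then yields $c(f_x,f_x)=c(f_y,f_y)$ and $c(f_x,f_y)=0$, i.e.\ $c(f_z,f_z)=0$; and in fact $\phi=s\circ f$ is the twistor lift $\tilde f_+$, since $J$-holomorphicity forces the orthogonal complex structure $f$ induces on $f^*TM$ to coincide with $J$. Neither remark is needed above.
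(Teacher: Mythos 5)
Your proposal is correct and follows exactly the paper's route: compose $f$ with the holomorphic section $s$ determined by $\omega_c$, observe the composite is a non-vertical $\mathcal{J}_+$-holomorphic curve projecting to $f$, and invoke the twistor correspondence. The paper states this in a single sentence; your version merely spells out the chain rule, the non-vertical check, and the constant case.
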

\begin{proof}
The composition with the section $s:M\to\mathcal{Z}_+$ determined by $J$ is a $\mathcal{J}_+$-holomorphic curve of $\mathcal{Z}_+$.
\end{proof}

\end{subsection}

\begin{subsection}{Adjunction Inequality}
In this section we prove theorem \ref{webforthm}
\begin{proof} Fix a metric $\langle \, , \rangle$ in the conformal class.
For a holomorphic normal coordinate $z$ on $\Sigma$, split $f_z$ into its holomorphic and antiholomorphic parts
\[ \alpha = \frac{1}{2}(f_z-iJf_z) \qquad \bar\beta = \frac{1}{2}(f_z+iJf_z), \]
we have 
\[\langle \alpha, \alpha \rangle = 0 = \langle \beta,\beta \rangle. \]
When $f$ is weakly conformal, this implies that 
\[ \langle \alpha, \bar\beta \rangle = 0. \]
Thus $\alpha$ and $\beta$ are Hermitian orthogonal and away from their zeros span the holomorphic tangent bundle $f^*T^{(1,0)}M$.
The Weyl-harmonic map equation in coordinates is
\[ \nabla^D_{\partial_{\bar z}} f_z=0. \]
Since $\nabla^D$ does not preserve the almost-complex structure, we write the equation using the connection $\nabla^{D,J}$, given by
\[ \nabla^{D,J}_X Y = \nabla^D_X Y -\frac{1}{2}J(\nabla^D_X J)Y. \]
This connection preserves the complex structure, and thus preserves the holomorphic and anti-holomorphic tangent spaces.
In terms of this connection the Weyl-harmonic map equation is 
\[ \nabla^{D,J}_{\partial_{\bar{z}}} \frac{\partial\phi}{\partial z}=-\frac{1}{2}J(\nabla^D_{\partial_{\bar{z}}} J) \frac{\partial\phi}{\partial z}. \]
Since $\nabla^D_{\partial_{\bar{z}}} J$ is $J$ anti-linear, it maps from $T^{(1,0)}M$ to $T^{(0,1)}M$ and from $T^{(0,1)}M$ to $T^{(1,0)}M$.
The Weyl-harmonic map equation can  then be written in terms of $\alpha$ and $\bar\beta$ as
\begin{align}
\nabla^{D,J}_{\partial_{\bar{z}}} \alpha&=-\frac{i}{2}(\nabla^D_{\partial_{\bar{z}}} J) \bar\beta,\\
\nabla^{D,J}_{\partial_{\bar{z}}} \bar\beta&=\frac{i}{2}(\nabla^D_{\partial_{\bar{z}}} J) \alpha.
\end{align}Using proposition \ref{weylJN}, a weakly conformal Weyl-harmonic map must satisfy
\begin{align*}\left\langle \nabla^{D,J}_{\partial_{\bar{z}}} \alpha, \bar\alpha \right\rangle &= -\frac{i}{2}\left\langle (\nabla^D_{\partial_{\bar{z}}} J)\bar\beta, \bar\alpha \right\rangle, & \left\langle \nabla^{D,J}_{\partial_{\bar{z}}} \alpha, \bar\beta \right\rangle &= -\frac{i}{2}\left\langle (\nabla^D_{\partial_{\bar{z}}} J)\bar\beta, \bar\beta \right\rangle, \\
&= \frac{i}{4}\left\langle  N\left(\bar\beta,\bar\alpha\right),Jf_{\bar z} \right\rangle, &&= \frac{i}{4}\left\langle  N\left(\bar\beta,\bar\beta\right),Jf_{\bar z} \right\rangle,\\
&= \frac{1}{4}\langle N(\bar\alpha,\bar\beta),\bar\alpha \rangle, &&=0,
\end{align*}
where the last line follows from $\bar\alpha,\bar\beta \in f^* T^{(0,1)}M$, which implies $N(\bar\alpha,\bar\beta)\in f^*T^{(1,0)}M$.
This implies that away from the zeros of $\alpha$ and $\beta$,
\begin{equation} \nabla^{D,J}_{\partial_{\bar{z}}} \alpha = \frac{\langle N(\bar\alpha,\bar\beta),\bar\alpha \rangle}{4\|\alpha\|^2}\alpha. \end{equation}
Similarly
\begin{align*}\left\langle \nabla^{D,J}_{\partial_{\bar{z}}} \bar\beta, \beta \right\rangle &= \frac{i}{2}\left\langle (\nabla^D_{\partial_{\bar{z}}} J)\alpha, \beta \right\rangle,
& \left\langle \nabla^{D,J}_{\partial_{\bar{z}}} \bar\beta, \alpha \right\rangle &= \frac{i}{2}\left\langle (\nabla^D_{\partial_{\bar{z}}} J)\alpha, \alpha \right\rangle, \\
&= -\frac{i}{4}\left\langle  N\left(\alpha,\beta\right),Jf_{\bar z} \right\rangle, & &= -\frac{i}{4}\left\langle  N\left(\alpha,\alpha\right),Jf_{\bar z} \right\rangle, \\
&= \frac{1}{4}\langle N(\alpha,\beta),\beta \rangle, &&=0.
\end{align*}
This gives
\begin{equation} \nabla^{D,J}_{\partial_{\bar{z}}} \bar\beta = \frac{\langle N(\alpha,\beta),\beta \rangle}{4\|\beta\|^2}\bar\beta. \end{equation}
By the Koszul-Malgrange theorem \cite{Koszul1958}, there are holomorphic structures on $f^*T^{(1,0)}M$ and $f^*T^{(0,1)}M$ so that 
\[ \bar\partial X = \nabla^{D,J}_{\partial_{\bar z}} X \otimes d\bar z. \]
Then for a Weyl-harmonic map 
\begin{align}
\bar\partial \alpha &= \frac{\langle N(\bar\alpha,\bar\beta),\bar\alpha \rangle}{4\|\alpha\|^2}\alpha\otimes d\bar z,\\
\bar\partial \bar\beta &= \frac{\langle N(\alpha,\beta),\beta \rangle}{4\|\beta\|^2}\bar\beta\otimes d\bar z.
\end{align}
The Bers-Vekua similarity principle (see \cite{jost1991}) implies that near any point $p\in\Sigma$ we have
\[ \alpha = \gamma_p e^{\sigma_p}, \qquad \bar\beta = \delta_p e^{\tau_p}, \]
for some local holomorphic sections $\gamma_p$ of $f^*T^{(1,0)}M$, $\delta_p$ of $f^*T^{(0,1)}M$, and some bounded functions $\sigma_p, \tau_p$.
This can be used to define the indices
\begin{align}
R &= \sum_{f_z(p)=0} \ord_p(f_z) \ge 0,\\
Q &= \sum_{\alpha(p)=0} \ord_p(\gamma_p) - R \ge 0,\\
P &= \sum_{\bar\beta(p)=0} \ord_p(\delta_p) - R \ge 0.
\end{align}
These are the total ramification index $R$, the number of anti-complex points $Q$, and the number of complex points $P$.
Following \cite{eells1985}, these determine the degrees of the line bundles spanned by the vector valued one forms $f_z dz$, $\alpha dz$ and $\bar\beta dz$ respectively.
If $[f_z]$, $[\alpha]$, and $[\beta]$ are line bundles generated by the locally defined sections then we have
\begin{align*}
R &= -\chi(\Sigma)+c_1([f_z]),\\
Q+R &= -\chi(\Sigma)+c_1([\alpha]),\\
P+R &= -\chi(\Sigma)-c_1([\beta]).
\end{align*}
We also have $f^*T^{(1,0)}M = [\alpha]\oplus [\beta]$, and since $\alpha$ and $\bar\beta$ span a negatively oriented, maximal isotropic subspace of $f^*TM\otimes \mathbb{C}$ which contains $f_z$, it must be that $f^*T^{(1,0)}_- M=[\alpha]\oplus [\bar\beta]$.
Therefore we have
\begin{align*}
c_1(f^*T^{(1,0)}M) &= Q-P, \\
c_1(f^*T^{(1,0)}_-M) &= Q+P+2R+2\chi(\Sigma), \\
 &= Q + P + 2c_1([f_z]), \\
 &= Q + P + 2\chi(T_f\Sigma).
\end{align*}
Since $c_1(f^*T^{(1,0)}_-M) = \chi(T_f\Sigma)-\chi(N_f\Sigma)$ we now have the Webster's formulas
\begin{align}
c_1(f^*T^{(1,0)}M) &= Q-P, \\
\chi(T_f\Sigma)+\chi(N_f\Sigma) &= -P-Q.
\end{align}
\end{proof}
Since $P$ and $Q$ are both non-negative, this implies the adjunction inequality (\ref{adj}) of corollary \ref{adjcor}
\begin{align}
\chi(T_f\Sigma)+\chi(N_f\Sigma)+c_1(f^*T^{(1,0)}M) &= -2P \le 0,\\
\chi(T_f\Sigma)+\chi(N_f\Sigma)-c_1(f^*T^{(1,0)}M) &= -2Q \le 0.
\end{align}
\end{subsection}

\end{section}

\begin{section}{Examples}

\begin{subsection}{Hopf Surfaces}
The primary Hopf surface $M=S^1\times S^3$ is fibered over $S^2$ with fiber $T^2$. 
The bundle projection is just the projection to the $S^3$ component followed by the Hopf map.
There is a Hermitian structure on $M$ induced by the standard Hermitian structures on the base and fiber.
The Lee form is just $d\phi$, where $\phi$ is the angle along $S^1$.
Every fiber is $J$-holomorphic and is therefore Weyl-minimal.
It is also minimal as $\theta^\sharp$ is tangent to the fiber.

In addition, there is a Lagrangian Weyl-minimal surface for every great circle $\gamma$ in the base $S^2$.
To see this consider the Clifford torus in $S^3$ which maps to $\gamma$ under the Hopf map.
This torus contains two great circles on $S^3$, one tangent to the fiber and one perpendicular to the fiber.
The great circle perpendicular to the fiber times the product $S^1$ gives a Lagrangian totally geodesic $T^2$ to which $\theta^\sharp$  is tangent, and is therefore Weyl-minimal.

Since $\theta=d\phi$ is closed we can look at the universal cover $\tilde M=\mathbb{R}\times S^3$.
Using $\phi$ as the coordinate on $\mathbb{R}$ the metric is just
\[ g_{\tilde M} = d\phi^2 + g_{S^3}.  \]
Therefore the Weyl-minimal surfaces will lift to minimal surfaces of the conformal metric
\[ e^{2\phi}g_{\tilde M} = e^{2\phi}d\phi^2+e^{2\phi}g_{S^3} = (de^\phi)^2+e^{2\phi}g_{S^3}.\]
Using the new coordinate $r=e^\phi$ this is just the (incomplete) flat metric on $\mathbb{R}^4\setminus 0 \cong \tilde M$.
\[ dr^2 + r^2 g_{S^3}. \]
Any surface lifted from $M$ will be invariant under deck transformation $\phi\mapsto\phi+2\pi$ or $r\mapsto e^{2\pi}r$.
The Weyl-minimal surfaces described above correspond to the planes through the origin in $\mathbb{R}^4$.
\end{subsection}
\begin{subsection}{$U(1)\times U(1)$ Principal Bundles over a Riemann Surface}
Let $p:M\to\Sigma$ be a $U(1)\times U(1)$ principal bundle over a Riemann surface $\Sigma$ with volume form $\omega_\Sigma$.
If $i\beta$ is a connection form then $\beta$ is an $\mathbb{R}^2$-valued form with components $\beta_1$ and $\beta_2$.
If $\tilde\omega_\Sigma=p^*\omega_\Sigma$ then $d\beta=F\tilde\omega_\Sigma$ where $F=(F_1,F_2):\Sigma\to\mathbb{R}^2$.
The K\"ahler form $\omega=\beta_1\wedge\beta_2+\tilde\omega_\Sigma$ has exterior derivative 
\[ d\omega = (F_1\beta_2-F_2\beta_1)\wedge\tilde\omega_\Sigma = (F_1\beta_2-F_2\beta_1)\wedge\omega.\]
Therefore the Lee form is $\theta=F_1\beta_2-F_2\beta_1$.  For a constant curvature connection, this will be closed.
The Hopf surface is a special case for this example where $\Sigma=S^2$ with the round metric, and $M$ has associated bundle $M\times\mathbb{C}^2/U(1)\times U(1)=\mathbb{C}\oplus\mathbf{K}$.
As in that case, the fiber is always a $J$-holomorphic curve and therefore Weyl-minimal.
If a closed geodesic on $\gamma:S^1\to\Sigma$ has a closed horizontal lift $\tilde\gamma$ and the connection has constant curvature then 
$\tilde\gamma(s) \cdot (e^{-iF_2 t},e^{iF_1 t})$ parametrizes a Lagrangian minimal torus on $M$ to which $\theta^\sharp$ is tangent, and thus Weyl-minimal.
\end{subsection}
\end{section}

\bibliography{mybib}
\bibliographystyle{plain}

\end{document}